\begin{document}

\title[Algebraic and geometric extensions of Goldbach's conjecture]
{On some algebraic and geometric extensions of Goldbach's conjecture}

\author[Danny A. J. G\'omez-Ram\'irez]{Danny A. J. G\'omez-Ram\'irez}
\address{Instituci\'on Universitaria Pascual Bravo, Medell\'in, Colombia. Visi\'on Real Cognitiva (Cognivisi\'on) S.A.S. Itagu\'i, Colombia.}
\email{daj.gomezramirez@gmail.com}

\author[A. F. Boix]{Alberto F. Boix}
\address{IMUVA–Mathematics Research Institute, Universidad de Valladolid, Paseo de Belen, s/n, 47011,
Valladolid, Spain.}
\email{alberto.fernandez.boix@uva.es}

\keywords{Goldbach conjecture, integrally indecomposable polytope, Gao irreducibility criterion, forcing algebras}

\subjclass[2020]{11R09, 52B20.}

\begin{abstract}
The goal of this paper is to study Goldbach's conjecture for rings of regular functions of affine algebraic varieties over a field. Among our main results, we define the notion of Goldbach condition for Newton polytopes, and we prove in a constructive way that any polynomial in at least two variables over a field can be expressed as sum of at most $2r$ absolutely irreducible polynomials, where $r$ is the number of its non--zero monomials. We also study other weak forms of Goldbach's conjecture for localizations of these rings. Moreover, we prove the validity of Goldbach's conjecture for a particular instance of the so--called forcing algebras introduced by Hochster. Finally, we prove that, for a proper multiplicative closed set $S$ of $\mathbb{Z}$, the collection of elements of $S^{-1}\mathbb{Z}$ that can be written as finite sum of primes forms a dense subset of the real numbers, among other results.
\end{abstract}


\maketitle

\section*{Introduction}
Goldbach´s conjecture has been one of the most famous open problems in elementary Number Theory and in Mathematics, partly because its simple description and for the difficulty of finding a general solution for it. It was originally proposed in a letter exchange between the mathematicians Christian Goldbach and Leonard Euler in 1742 (see \cite[pages 189--191]{GoldbachtoEulerconjecture} for the original letter, for an English translation see \cite{GoldbachtoEulerconjecturetranslated}). In modern terms it states that any even integer number bigger or equal than 4 can be written as the sum of two prime numbers \cite{vaughan2016goldbach}. It is elementary to see that the conjecture is equivalent to show that any integer number bigger or equal to 5 can be written as the sum of at most 3 primes.\footnote{Here, as usual, 1 is not considered a prime number. However, in the original description of Goldbach, 1 was considered a prime.} Another elementary equivalent form states that for any integer $x$ and $n$, both bigger or equal than 2, the number $nx$ can be expressed as the sum of exactly $n$ prime numbers. In other words, there exists prime numbers $p_1,\cdots,p_n$ such that 

\begin{equation}\label{seminal}
 nx=\sum_{i=1}^{n}p_i.
\end{equation}

Although the original conjecture has not been resolved, there has been a lot of outstanding results partial results by J. J. Sylvester, G. H. Hardy, J. E. Littlewood, S. Ramanujan, I. M. Vinogradov, using asymptotic methods, whose combined and improved techniques are known nowadays as the \textit{Hardy-Littlewood-Ramanujan-Vinogradov method}; as well as closed related statements developed over the integers and proved by E. Bombiery, T. Tao and B. J. Green, among many others (for a complete historical and technical review see \cite{vaughan2016goldbach}). 

Regarding extensions of Goldbach's conjecture to other commutative rings with unity, one of the most interesting elementary results is the one proved in 1965 by D. R. Hayes (rediscovered 30 years after by A. Rattan and C. Stewart \cite{RattanStewart}) which proves the conjecture for the ring $\mathbb{Z}[x]$. More precisely, it says that any polynomial of degree $n$ with integer coefficients can be written as the sum of two irreducible polynomials of the same degree \cite[Theorem 1]{hayes1965goldbach}. His proof was completely elementary and used mainly the classic Eisenstein's criterion for irreducibility of polynomials over the integers. In the same paper, Hayes also proved the conjecture for $D[x]$ where $D$ is a principal ideal domain which contains infinitely many prime elements \cite[Theorem 2]{hayes1965goldbach}. Hayes result was also proved by F. Saidak \cite{Saidakintegerpolynomials}, who also provides an upper bound for the number of ways one can write a polynomial with integer coefficients as sum of two irreducible ones with a prescribed upper bound on the coefficients. More than 40 years after, P. Pollack was able to find a kind of generalization of Hayes' argument to prove the corresponding version of Hayes' theorem but for polynomials over a Noetherian integral domain with infinitely many maximal ideals \cite[Theorem 1]{pollack2011polynomial}; or over a ring of polynomials over an integral domain \cite[Theorem 2]{pollack2011polynomial}. In this context, A. Bodin, P. D\`ebes and S. Najib, building upon new results on the Schinzel's hypothesis, proved Goldbach's conjecture for the ring $R[x]$, where $R$ is a unique factorization domain with fraction field satisfying certain technical conditions, the interested reader can consult \cite[Theorem 1.1 and Corollary 1.5]{bodin2020schinzel} for further details.


For rings of formal power series, the situation is a bit more complicated. As shown by E. Paran in \cite[Theorem 1.1]{Goldbachpowerseries}, a formal power series
\[
f=\sum_{i\geq 0}f_i x^i\in\mathbb{Z}[\![x]\!]
\]
is a sum of two irreducible power series if and only if $f_0$ is either of the form $\pm p^k\pm q^l$ or of the form $\pm p^k$, where $p,\ q$ are prime integers and $k,\ l$ are positive integers. In particular, this shows that not all the elements of $\mathbb{Z}[\![x]\!]$ can be written as sum of two irreducible power series, the interested reader may like to consult \cite[page 454]{Goldbachpowerseries} for a concrete example.

On the other hand, we will get heuristic inspiration in the form of using the cognitive (metamathematical) processes called (generic) exemplification in the context of Cognitive-Computational Metamathematics (CCMM) (or Artificial Mathematical Intelligence \cite{AMI})  as a starting point for developing new interesting mathematical structures (e.g. concepts, proofs and theories) \cite[Chapter 10, \S10.3]{AMI}. In other words, an implicit heuristic pillar of our presentation will be to use \eqref{seminal}, (as an initial (generic) exemplification) and slight generalizations of it as a kind of working criterion for studying and generating further algebraic structures in the context of Commutative Algebra and classic Algebraic Geometry, e.g. coordinate rings of affine algebraic varieties over an algebraically closed field. 

To clarify more explicitly what we mean here in terms of exemplification, we refer the more curious reader to \cite[\S 6]{gomezbrennernormality} and \cite{gomez2013homological}, where one appreciates how important a simple concept can be, like the one of forcing algebra and a concrete instance of it, as a conceptual intersection point where other seminal mathematical notions ad-hoc meet. In this way, one can start adopting a paradigm-shifting perspective in doing mathematics, where concrete examples constitute the starting working points to compare and develop new theories and interesting mathematical structures.

Going back to our original question, the surprising work by G. Effinger, K. Hicks and G. L. Mullen, (see \cite{effinger2005integers} and the references given therein), shows in a highly intuitive manner that the integers and the ring of polynomials over a finite field are highly close algebraic structures, perhaps closer than it might seem at first glance. For instance, one can define and extend almost exactly some seminal aspects of classic analytic methods (due originally to Hardy, Littlewood, Ramanujan and Vinogradov) for stating and subsequently proving (contingent) results related to Goldbach and the twin-primes conjectures for the ring $\mathbb{F}_q[x]$, that were originally developed for $\mathbb{Z}$.

This suggests that studying Goldbach conjecture for rings of polynomials in several variables over a (several types of) field(s) could give considerable additional insight for obtaining new techniques for tackling the classic Goldbach conjecture.

So, we can start with a field of coefficients being algebraically rich enough such that one can easily check if Goldbach conjecture (GC) holds (or not) for the corresponding ring of regular functions \cite{gortz2010algebraic}. Thus, in the next section we study GC for special rings of coordinates of affine varieties.

Now, we provide a brief summary of the contents of this paper for the convenience of the reader. In Section \ref{main result section}, we present one of the main results of the paper, which essentially says that any polynomial in at least two variables over a field can be written explicitly as sum of at most $2r$ absolutely irreducible polynomials, where $r$ denotes the number of its non--zero monomials (see \thref{teo1}). Our proof of \thref{teo1} is constructive, which leads us in Section \ref{implemented result section} to present an algorithm, implemented in Macaulay2 \cite{M2}, that given as input a polynomial in at least two variables over a field, returns as output its decomposition in $2r$ absolutely irreducible polynomials. On the other hand, in Section \ref{section on Goldbach conjecture and localization} we study some weak forms of Golbach's conjecture over the integers and its localizations; more precisely, we want to provide some partial positive answers to the non--trivial question of whether Golbach's conjecture holds over $S^{-1}\mathbb{Z}$, where $S$ is a non--trivial multiplicative closed set. In Section \ref{section of more localization}, we study again Golbach's conjecture on the localization of a polynomial ring over a field, showing as main result (see \thref{localization}) that the conclusion of \thref{teo1} still holds replacing the ring of polynomials by a suitable localization of it. In Section \ref{section on forcing algebras}, we obtain as main result (see \thref{Goldbach conjecture and forcing algebras}) that Goldbach's conjecture holds over a very particular case of the so--called forcing algebras, that were introduced by M.\,Hochster in \cite{solidclosure} and later developed specially by H.\,Brenner and D. A. J. G\'omez--Ram\'irez (see \cite{gomezbrennernormality} and the references given therein). In Section \ref{section on strong Golbach condition}, essentially buinding upon Pollack's result, we exhibit some coordinate rings of affine algebraic varieties where any regular function can be written as a sum of two irreducible ones (see \thref{prop2} and its corollaries). Finally, in Section \ref{section on the curve case} we present some examples to illustrate that the validity or not of Golbach's conjecture is more complicated to dealt with for rings attached to affine algebraic curves.

\section{Explicit Weak Forms of the Goldbach Conjecture for some special classes of Polynomial (Coordinate) Rings over Fields and Commutative Rings with Unity}\label{main result section}

As we mentioned in the introduction, we want to study a more general form of \eqref{seminal}, where we allow more flexibility in the parameter $n$. In other words, we will study some special collections of coordinate rings where equations of the form 

\begin{equation}\label{seminal2}
n_1H=\sum_{i=1}^{n_2}p_i
\end{equation}
holds, where $n_1,n_2\in\mathbb{N}$ are parameters that can vary independently or not (from each other as well as from the element $H$).

Let us start with a fundamental (algebraic) structure in classic algebraic geometry: the ring of regular functions on the complex affine space $\mathbb{A}_{\mathbb{C}}^n$, i.e., the ring of polynomials $\mathcal{O}(\mathbb{A}_{\mathbb{C}}^n)=\mathbb{C}[x_1,\ldots,x_n]$. This classic geometric-algebraic space is a suitable starting point, since a lot of further algebraic constructions (in characteristic zero) are implicitly related with it (e.g. the ring of coordinates of any affine complex variety is simply a quotient of the one of  $\mathbb{A}_{\mathbb{C}}^n$). Moreover, it is highly surprising that despite of  $\mathbb{A}_{\mathbb{C}}^n$ being one of the most canonical and primary spaces of study in classic algebraic geometry, a lot of information regarding its geometry and its symmetries is still a mystery \cite{kraft1996challenging}. 
Now, regarding \eqref{seminal2} for the ring $\mathcal{O}(\mathbb{A}_{\mathbb{C}}^n)$, one sees straightforwardly that \thref{prop2} implies that a stronger form of the GC holds for $\mathcal{O}(\mathbb{A}_{\mathbb{C}}^n)$ for $n\geq 2$, i.e., any polynomial can be written as the sum of two irreducible ones (i.e., $n_1=1$ and $n_2=2$ in \eqref{seminal2}). However, the proof of this result (and the necessary lemmas needed) do not give explicit description of the desired decomposition into irreducibles.

In order to do so, we assume that the reader has some basic familiarity with the notion of Newton polytope associated with a polynomial (see for example \cite[Chapter 2, pages 9--12]{SturmfelsGBCPbook}). Anyway, in what follows we recall some notions for the sake of completeness.

A polynomial $f\in K[x_1,\ldots,x_n]$ is \textbf{absolutely irreducible} if it remains irreducible over each algebraic extension of $K.$ More generally, and inspired by \cite{Koyuncuirreducibilityoverrings}, given a commutative ring $R$ and $g\in R[x_1,\ldots,x_n],$ we say that $g$ is \textbf{absolutely irreducible} if it remains irreducible over each ring extension $R\subseteq S.$ We also recall here that, given a multiindex $\mathbf{a}=(a_1,\ldots,a_n)\in\mathbb{N}^n,$ we will denote by $\mathbf{x}^{\mathbf{a}}$ the monomial
\[
\mathbf{x}^{\mathbf{a}}:=\prod_{i=1}^n x_i^{a_i},
\]
and by $\supp (\mathbf{a})$ the set $\supp (\mathbf{a}):=\{i\in [n]:\ a_i\neq 0\}$, where $[n]:=\{1,\ldots,n\}$. Moreover, we denote by $\gcd (\mathbf{a})$ the greatest common divisor of its components; that is $\gcd (\mathbf{a}):=\gcd (a_1,\ldots,a_n).$ Finally, we review the notion of integrally indecomposable polytope.

\begin{definition}\label{integral decomposability: definition}
A point in $\mathbb{R}^n$ is called \textbf{integral} if its coordinates are integers. A polytope in $\mathbb{R}^n$ is called \textbf{integral} if all its vertices are integral.

Moreover, an integral polytope $C$ is called \textbf{integrally decomposable} if there are integral polytopes $A$ and $B$ such that $C=A+B,$ where both $A$ and $B$ have at least two points, and $+$ denotes the Minkowski sum of polytopes. Otherwise, we say that $C$ is \textbf{integrally indecomposable}.

Finally, given two points $\mathbf{p},\ \mathbf{q}\in\mathbb{R}^n$ we denote by $\overline{\mathbf{p}\mathbf{q}}$ the line segment starting at $\mathbf{p}$ and ending at $\mathbf{q}.$
\end{definition}
As pointed out along \cite{GaoLauderalgorithm}, the problem of testing whether a given polytope is integrally indecomposable is NP--complete. Our interest for integrally indecomposable polytopes stems from the following irreducibility criterion obtained by Gao in \cite[page 507]{gao2001absolute}.

\begin{theorem}[Gao irreducibility criterion]\thlabel{Gao irreducibility criterion}
Let $K$ be any field, and let $f\in K[x_1,\ldots,x_n]$ be a non--zero polynomial not divisible by any of the $x_i$'s. If the Newton polytope of $f$ is integrally indecomposable, then $f$ is absolutely irreducible.
\end{theorem}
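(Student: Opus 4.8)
The plan is to argue by contraposition, exploiting the dictionary between multiplicative factorizations of polynomials and Minkowski decompositions of their Newton polytopes. Suppose $f$ fails to be absolutely irreducible; by definition there is an algebraic extension $K\subseteq L$ and a factorization $f=gh$ in $L[x_1,\ldots,x_n]$ in which neither $g$ nor $h$ is a unit, so that both are non--constant. Writing $P_f$, $P_g$ and $P_h$ for the respective Newton polytopes, I would deduce from this a genuine integral decomposition $P_f=P_g+P_h$ (Minkowski sum) in which both summands have at least two points, contradicting the integral indecomposability of $P_f$ and thereby proving the theorem.

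The technical heart of the argument, and the step I expect to be the main obstacle, is the identity $P_{gh}=P_g+P_h$. The inclusion $P_{gh}\subseteq P_g+P_h$ is immediate, since every exponent occurring in $gh$ is a sum of one occurring in $g$ and one occurring in $h$, whence $\supp(gh)\subseteq\supp(g)+\supp(h)$ and one passes to convex hulls. The reverse inclusion amounts to showing that the vertices of the Minkowski sum survive in the product, i.e.\ are not annihilated by cancellation. Concretely, I would fix a vertex $v$ of $P_g+P_h$ together with a linear functional $\ell$ on $\mathbb{R}^n$ whose maximum over $P_g+P_h$ is attained only at $v$; since the face of $P_g+P_h$ maximizing $\ell$ is the Minkowski sum of the corresponding faces of $P_g$ and $P_h$, the uniqueness of $v$ forces each of those faces to be a single vertex, say $a$ of $P_g$ and $b$ of $P_h$, with $v=a+b$. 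Any pair of exponents $a'\in\supp(g)$, $b'\in\supp(h)$ with $a'+b'=v$ would then have to maximize $\ell$ over $P_g$ and $P_h$ respectively, hence coincide with $a$ and $b$; therefore the coefficient of $\mathbf{x}^{v}$ in $gh$ is exactly the product of the (non--zero) coefficients of $\mathbf{x}^{a}$ in $g$ and $\mathbf{x}^{b}$ in $h$, and so does not vanish. This places every vertex of $P_g+P_h$ in $\supp(gh)$, yielding $P_g+P_h\subseteq P_{gh}$.

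It remains to check that $P_g$ and $P_h$ each have at least two points, and this is precisely where the hypothesis that $f$ is divisible by no $x_i$ is used. If, say, $P_g$ were a single point, then $g$ would be a monomial $c\,\mathbf{x}^{a}$; being non--constant it would have $a\neq\mathbf{0}$, so some variable $x_i$ with $i\in\supp(a)$ would divide $g$ and hence divide $f=gh$, contrary to hypothesis. Thus both $P_g$ and $P_h$ contain at least two points, and both are integral polytopes since their vertices are exponent vectors lying in $\mathbb{N}^n$.

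Putting the pieces together, the hypothetical factorization $f=gh$ produces an integral decomposition $P_f=P_g+P_h$ with each summand having at least two points, which is impossible because $P_f$ is integrally indecomposable. Hence $f$ admits no non--trivial factorization over any algebraic extension of $K$, that is, $f$ is absolutely irreducible, as claimed.
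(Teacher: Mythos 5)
Your proof is correct. The paper does not prove this theorem itself --- it quotes it from Gao's article --- and your argument is essentially the standard one used there: Ostrowski's observation that $N(gh)=N(g)+N(h)$ (which you verify correctly via supporting linear functionals showing that vertices of the Minkowski sum cannot be killed by cancellation, since the base ring is a field), together with the remark that the hypothesis ``no $x_i$ divides $f$'' rules out one-point summands. Nothing further is needed.
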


The extension of Gao irreducibility criterion for certain rings was given by Koyuncu in \cite[Corollary 2.4]{Koyuncuirreducibilityoverrings}.

\begin{theorem}[Koyuncu irreducibility criterion]\thlabel{Koyuncu irreducibility criterion}
Let $R$ be a commutative ring, and $f\in R[x_1,\ldots,x_n]$ a non--zero polynomial not divisible by any of the $x_i$'s. Suppose that the coefficients of all the terms forming the vertices of the Newton polytope $P_f$ of $f$ are non--zero divisors of $R.$ Then, if $P_f$ is integrally indecomposable, then $f$ is absolutely irreducible.
\end{theorem}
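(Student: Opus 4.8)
The plan is to run Gao's argument (\thref{Gao irreducibility criterion}) over the larger ring, the one genuinely new ingredient being a control on which coefficients may vanish when one multiplies two polynomials over a ring possessing zero divisors. Assume, towards a contradiction, that $f$ is not absolutely irreducible, so that there is a ring extension $R\subseteq S$ and a factorization $f=gh$ in $S[x_1,\ldots,x_n]$ into two non-constant factors (the notion of reducibility relevant to the Newton polytope). First I would reduce to the case where neither factor is a monomial: if, say, $g=c\,\mathbf{x}^{\mathbf{a}}$ with $\mathbf{a}\neq\mathbf{0}$, then some $x_i$ divides $g$ and hence $f=gh$, contradicting the hypothesis that $f$ is not divisible by any of the $x_i$. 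Consequently each of $P_g$ and $P_h$ has at least two points, which is exactly the condition needed to produce a genuine decomposition witnessing integral decomposability at the end.

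The heart of the matter is the identity $P_f=P_g+P_h$, where $+$ denotes the Minkowski sum. One inclusion, $P_f\subseteq P_g+P_h$, holds over any commutative ring, since every exponent appearing in $gh$ is a sum of an exponent from $g$ and one from $h$. For the reverse inclusion I would use the standard fact that each vertex $v$ of $P_g+P_h$ admits a \emph{unique} decomposition $v=v_g+v_h$ with $v_g$ a vertex of $P_g$ and $v_h$ a vertex of $P_h$; by this uniqueness the only term of $gh$ with exponent $v$ is the product of the two corresponding vertex terms, so that the coefficient of $\mathbf{x}^v$ in $f=gh$ is the product of the vertex coefficients of $g$ and $h$, with no other contributions. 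Over a field this product is automatically non-zero and the equality of polytopes is immediate; over $S$ it could a priori vanish, and this cancellation of the extreme terms is the only obstruction to $P_f=P_g+P_h$.

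This is precisely where the hypothesis is meant to intervene: for each vertex $v=v_g+v_h$ of $P_g+P_h$ the coefficient of $\mathbf{x}^v$ in $f$ is the product of the vertex coefficients of $g$ and $h$, and one argues that this coefficient is in fact a vertex coefficient of $P_f$, hence a non-zero divisor by assumption; since a product is a non-zero divisor exactly when both of its factors are, neither vertex coefficient of $g$ nor of $h$ can vanish, no cancellation occurs, and every vertex of $P_g+P_h$ survives in $f$, yielding $P_f=P_g+P_h$. This exhibits $P_f$ as a Minkowski sum of two integral polytopes each having at least two points, contradicting the assumed integral indecomposability of $P_f$; hence no such factorization exists and $f$ is absolutely irreducible. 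I expect the main obstacle to be exactly this third step, namely matching the vertices of $P_f$ with the vertex decomposition of $P_g+P_h$ and ruling out cancellation there, together with the subtlety that a non-zero divisor of $R$ must be checked to remain a non-zero divisor in the larger ring $S$ in which the hypothetical factorization lives.
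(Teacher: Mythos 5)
The first thing to note is that the paper does not prove this statement at all: it is quoted from Koyuncu \cite[Corollary 2.4]{Koyuncuirreducibilityoverrings} and used as a black box, so your attempt can only be measured against the standard polytope argument. Its skeleton — assume a factorization $f=gh$ over an extension $S$, exclude monomial factors via the hypothesis that no $x_i$ divides $f$, use the inclusion $P_f\subseteq P_g+P_h$ and the unique vertex decomposition of a Minkowski sum — is exactly Gao's proof of the field case, and all of those ingredients you state correctly.

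The gap is in your third step, and it is not a technicality: it is the entire content of the passage from fields to rings. You argue that for a vertex $v=v_g+v_h$ of $P_g+P_h$ the coefficient of $\mathbf{x}^{v}$ in $f$ equals $c_g(v_g)c_h(v_h)$, that ``this coefficient is in fact a vertex coefficient of $P_f$, hence a non--zero divisor by assumption,'' and hence that it cannot vanish. But the hypothesis of the theorem concerns the vertices of $P_f=\conv(\supp f)$, i.e.\ the polytope of $f$ \emph{after} whatever cancellation occurs in the product $gh$. A vertex $v$ of $P_g+P_h$ is a vertex of $P_f$ precisely when one already knows $c_g(v_g)c_h(v_h)\neq 0$ (then $v\in\supp f\subseteq P_f\subseteq P_g+P_h$, and an extreme point of the larger polytope lying in the smaller one is extreme there); if instead the product vanishes, then $v\notin P_f$ at all and the hypothesis says nothing about it. So the step assumes exactly what it must prove, namely that no cancellation occurs at the vertices of $P_g+P_h$; without it you only obtain $P_f\subseteq P_g+P_h$, which is perfectly compatible with $P_f$ being integrally indecomposable. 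Two further loose ends: first, a non--zero divisor of $R$ need not remain one in the extension $S$ where the factorization lives (e.g.\ $2\in\mathbb{Z}\subseteq\mathbb{Z}[t]/(2t)$), so the implication ``the product is a non--zero divisor, hence so are both factors'' cannot be run inside $S$ — although for your purposes mere non--vanishing of the product would suffice, so this is less damaging than the circularity; second, your reduction to non--monomial factors silently discards factorizations $f=c\cdot h$ with $c$ a non--unit constant of $S$, which the Newton polytope cannot detect (compare $2x+2y=2(x+y)$ over $\mathbb{Z}$, whose polytope is integrally indecomposable and whose vertex coefficients are non--zero divisors). That last point is really an imprecision in the quoted statement rather than in your argument, but a complete proof must state explicitly which notion of (ir)reducibility it actually delivers.
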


In the proof of our first main result (see \thref{teo1}) we plan to use in a crucial way \nameref{Gao irreducibility criterion} combined with the following construction, also due to Gao \cite[Theorem 4.2]{gao2001absolute}.

\begin{proposition}\thlabel{integrally indecomposable pyramid}
Let $Q$ be any integral polytope in $\mathbb{R}^n$ contained in a hyperplane $H$ and let $\mathbf{v}\in\mathbb{R}^n$ be an integral point lying outside of $H.$ Suppose that the vertices $\mathbf{v}_1,\ldots,\mathbf{v}_r$ are all the vertices of $Q.$ Then, the polytope $\conv (Q,\mathbf{v})$ given by the convex hull of $\mathbf{v}_1,\ldots,\mathbf{v}_r,\mathbf{v}$ is integrally indecomposable if and only if
\[
\gcd (\mathbf{v}-\mathbf{v}_1,\ldots,\mathbf{v}-\mathbf{v}_r)=1.
\]
\end{proposition}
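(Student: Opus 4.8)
The plan is to prove both implications by working directly with Minkowski decompositions, after a harmless normalization. First I would translate so that the hyperplane $H$ passes through the origin and fix a linear functional $\ell$ with $H=\ker\ell$ and $\ell(\mathbf{v})=:h>0$; thus every base vertex $\mathbf{v}_i$ has height $\ell(\mathbf{v}_i)=0$ and the apex $\mathbf{v}$ is the unique vertex of maximal height. Writing $\mathbf{w}_i:=\mathbf{v}-\mathbf{v}_i$, observe that $\conv(Q,\mathbf{v})=\mathbf{v}+\conv(\mathbf{0},-\mathbf{w}_1,\ldots,-\mathbf{w}_r)$, and set $g:=\gcd(\mathbf{w}_1,\ldots,\mathbf{w}_r)$, the greatest common divisor of all coordinates of all the $\mathbf{w}_i$.

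For the implication ``$g>1\Rightarrow$ decomposable'' I would exhibit an explicit decomposition. Since $g$ divides $(\mathbf{w}_i)_j$ for every $i,j$, the polytope $C:=\conv(\mathbf{0},-\mathbf{w}_1/g,\ldots,-\mathbf{w}_r/g)$ is integral, and because taking convex hulls commutes with dilation we get $\conv(\mathbf{0},-\mathbf{w}_1,\ldots,-\mathbf{w}_r)=gC=C+(g-1)C$. Translating, $\conv(Q,\mathbf{v})=(\mathbf{v}+C)+(g-1)C$ is a Minkowski sum of two integral polytopes, each having at least two points (as $\mathbf{w}_1\neq\mathbf{0}$ and $g\ge 2$), so $\conv(Q,\mathbf{v})$ is integrally decomposable.

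The converse is the substantive direction, and I would prove the contrapositive: any nontrivial decomposition $\conv(Q,\mathbf{v})=A+B$ forces $g>1$. The key structural input is standard Minkowski--summand theory: every vertex of $A$ is the $A$-part of a vertex of $\conv(Q,\mathbf{v})$, so the vertices of $A$ lie among $\mathbf{a},\mathbf{a}_1,\ldots,\mathbf{a}_r$, where $\mathbf{v}=\mathbf{a}+\mathbf{b}$ and $\mathbf{v}_i=\mathbf{a}_i+\mathbf{b}_i$ are the unique decompositions of the vertices; moreover each lateral edge $\overline{\mathbf{v}\mathbf{v}_i}$ splits into faces of $A$ and $B$ parallel to $\mathbf{w}_i$, which yields $\mathbf{a}_i=\mathbf{a}-\lambda_i\mathbf{w}_i$ with $0\le\lambda_i\le 1$ and $\lambda_i\mathbf{w}_i\in\mathbb{Z}^n$. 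I then claim all $\lambda_i$ coincide: computing heights, the minimum of $\ell$ on $A$ equals $\ell(\mathbf{a})-h\max_i\lambda_i$ and on $B$ equals $\ell(\mathbf{b})-h\max_i(1-\lambda_i)$, so the minimum of $\ell$ on $A+B$ equals $h(\min_i\lambda_i-\max_i\lambda_i)$; since this must equal the minimal height $0$ attained on the base $Q$ and $h\neq 0$, we get $\max_i\lambda_i=\min_i\lambda_i=:\lambda$. Hence $A=\mathbf{a}+\lambda\,(\conv(Q,\mathbf{v})-\mathbf{v})$ is a dilated translate of the pyramid, with $0<\lambda<1$ forced by nontriviality, and integrality of $A$ demands $\lambda\mathbf{w}_i\in\mathbb{Z}^n$ for all $i$. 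A B\'ezout combination of the coordinates of the $\mathbf{w}_i$ then gives $\lambda g\in\mathbb{Z}$ with $0<\lambda g<g$, which is possible only if $g\ge 2$.

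I expect the main obstacle to be making the structural claims about $A$ rigorous---namely that the vertices of $A$ are exactly the $A$-parts of the vertices of $\conv(Q,\mathbf{v})$ and that each lateral edge decomposes into faces parallel to $\mathbf{w}_i$---since these rest on the relationship between the normal fan of a Minkowski sum and those of its summands. Once the ``all $\lambda_i$ equal'' reduction is in place, the remaining gcd/B\'ezout step is routine.
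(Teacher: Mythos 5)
Note first that the paper does not actually prove this proposition: it is quoted verbatim from Gao \cite[Theorem 4.2]{gao2001absolute} and used as a black box, so there is no in-paper argument to measure yours against. That said, your proof is correct and is essentially the standard (and, as far as I recall, Gao's own) argument. The easy direction $g>1\Rightarrow$ decomposable via $gC=C+(g-1)C$ is fine, including the check that both summands have at least two points. In the converse, the chain of deductions — vertices of a summand $A$ sit among the $A$-parts $\mathbf{a},\mathbf{a}_1,\ldots,\mathbf{a}_r$ of the vertices of the pyramid, each lateral edge $\overline{\mathbf{v}\mathbf{v}_i}$ splits as a sum of (possibly degenerate) parallel subsegments giving $\mathbf{a}_i=\mathbf{a}-\lambda_i\mathbf{w}_i$ with $\lambda_i\mathbf{w}_i\in\mathbb{Z}^n$, the height computation forcing all $\lambda_i$ equal, nontriviality forcing $0<\lambda<1$, and the B\'ezout step giving $\lambda g\in\mathbb{Z}$ hence $g\geq 2$ — is sound; I verified the height bookkeeping ($\min_{A+B}\ell=h(\min_i\lambda_i-\max_i\lambda_i)=0$) and it checks out. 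The two structural facts you flag as the main obstacle (every vertex of a Minkowski summand is the summand-part of a vertex of the sum, and every edge of the sum decomposes into parallel faces of the summands) are genuinely the only nonelementary inputs, but they are standard and are exactly the preparatory lemmas Gao proves before his Theorem 4.2, so deferring to them is legitimate. Two cosmetic points: the normalizing translation should be by an integral vector (e.g.\ $-\mathbf{v}_1$) so that integrality is preserved, and you should remark that $\overline{\mathbf{v}\mathbf{v}_i}$ really is an edge of $\conv(Q,\mathbf{v})$ (extend a functional exposing $\mathbf{v}_i$ on $Q$ so that it takes the same value at $\mathbf{v}$); neither is a gap.
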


Inspired also by Gao and Koyuncu criteria, we also want to introduce here the following definition.

\begin{definition}\label{Goldbach conjecture for polytopes}
Let $n\geq 1$ be an integer, and let $P\subseteq\mathbb{R}^n$ be a polytope of the form $P=\conv (\mathbf{v}_1,\ldots,\mathbf{v}_r)$ for some $\mathbf{v}_1,\ldots,\mathbf{v}_r\in\mathbb{N}^n.$ We say that $P$ satisfies \textbf{Goldbach condition} if either $P$ is integrally indecomposable, or there are points $\mathbf{w}_1,\ldots,\mathbf{w}_s\in\mathbb{N}^n$ such that:

\begin{enumerate}[(i)]

\item $\conv (\mathbf{w}_1,\ldots,\mathbf{w}_s)$ is integrally indecomposable.

\item We have
\[
\bigcap_{i=1}^s \supp (\mathbf{w}_i)=\emptyset.
\]

\item $\conv (\mathbf{v}_1,\ldots,\mathbf{v}_r,\mathbf{w}_1,\ldots,\mathbf{w}_s)$ is integrally indecomposable.

\end{enumerate}
\end{definition}
Our reason for introducing Goldabch condition on polytopes is given by the following result.

\begin{proposition}\thlabel{from polytopes to polynomials}
Let $K$ be any field, and let $f\in K[x_1,\ldots, x_n]$ be a polynomial not divisible by any of the $x_i$'s. Assume that the Newton polytope $N(f)$ of $f$ satisfies Goldbach condition. Then, we have that either $f$ is absolutely irreducible, or $f=f_1+f_2,$ where both $f_1$ and $f_2$ are absolutely irreducible.
\end{proposition}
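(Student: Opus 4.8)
The plan is to split the argument along the two alternatives in Definition \ref{Goldbach conjecture for polytopes}. If $N(f)$ is itself integrally indecomposable, then, since by hypothesis $f$ is divisible by none of the $x_i$'s, the \nameref{Gao irreducibility criterion} applies verbatim and shows that $f$ is absolutely irreducible; this is already the first alternative of the conclusion. Otherwise $N(f)$ is integrally decomposable, and the Goldbach condition supplies witnesses $\mathbf{w}_1,\ldots,\mathbf{w}_s\in\mathbb{N}^n$ satisfying (i)--(iii). Writing $f=\sum_{i=1}^m\lambda_i\mathbf{x}^{\mathbf{a}_i}$ with all $\lambda_i\neq 0$, so that $N(f)=\conv(\mathbf{a}_1,\ldots,\mathbf{a}_m)$, I would introduce an auxiliary polynomial $g=\sum_{j=1}^s c_j\mathbf{x}^{\mathbf{w}_j}$ with coefficients $c_j\in K$ to be fixed below, and put
\[
f_1:=f+g,\qquad f_2:=-g,
\]
so that $f=f_1+f_2$ automatically. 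The entire task is then to arrange that both summands meet the hypotheses of Gao's criterion.

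I would deal with $f_2=-g$ first. Taking every $c_j\neq 0$ forces the support of $g$ to be exactly $\{\mathbf{w}_1,\ldots,\mathbf{w}_s\}$, whence $N(g)=\conv(\mathbf{w}_1,\ldots,\mathbf{w}_s)$, which is integrally indecomposable by (i). Condition (ii) then guarantees that $g$ is divisible by no $x_i$: for each index $i$ there is a witness $\mathbf{w}_j$ with $i\notin\supp(\mathbf{w}_j)$, i.e.\ a monomial of $g$ not involving $x_i$. The \nameref{Gao irreducibility criterion} thus makes $g$, and hence $f_2=-g$, absolutely irreducible.

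It remains to treat $f_1=f+g$. I would first observe that $\conv(N(f)\cup N(g))=\conv(\mathbf{a}_1,\ldots,\mathbf{a}_m,\mathbf{w}_1,\ldots,\mathbf{w}_s)$ equals the polytope $\conv(\mathbf{v}_1,\ldots,\mathbf{v}_r,\mathbf{w}_1,\ldots,\mathbf{w}_s)$ of (iii), since a convex hull depends only on the polytopes $N(f)$ and $\conv(\mathbf{w}_1,\ldots,\mathbf{w}_s)$ and not on the particular points generating them; in particular it is integrally indecomposable. To conclude I must ensure that passing to the genuine Newton polytope $N(f_1)$ costs nothing, i.e.\ that no monomial is annihilated by cancellation. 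This is where the coefficients get pinned down: whenever $\mathbf{w}_j=\mathbf{a}_i$ I would additionally demand $c_j\neq-\lambda_i$; as there is at most one such $i$ per $j$, this rules out only one further value of $c_j$. With the support of $f+g$ then equal to the full union $\{\mathbf{a}_i\}\cup\{\mathbf{w}_j\}$, one gets $N(f_1)=\conv(N(f)\cup N(g))$, integrally indecomposable. Finally $f_1$ is divisible by no $x_i$, because by (ii) the surviving monomial $\mathbf{x}^{\mathbf{w}_j}$ with $i\notin\supp(\mathbf{w}_j)$ lies in its support. Gao's criterion again gives that $f_1$ is absolutely irreducible, yielding the decomposition $f=f_1+f_2$.

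The step I expect to be the main obstacle is precisely the simultaneous control of cancellation and the existence of admissible coefficients: each $c_j$ must avoid the forbidden values $0$ and, when relevant, $-\lambda_i$. This is harmless over an infinite field, but over a finite field one must check that $K$ has enough elements, the extreme case $K=\mathbb{F}_2$---where only the value $1$ is available, so that a coincident monomial would inevitably cancel---being the one to scrutinize. Confirming that these finitely many constraints can always be satisfied, or else handling small fields by choosing witnesses whose exponents are disjoint from $\supp(f)$ so that no cancellation can occur, is the technical heart of the proof; everything else reduces to two applications of the \nameref{Gao irreducibility criterion}.
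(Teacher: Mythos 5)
Your proposal follows the same route as the paper's own proof: the dichotomy on whether $N(f)$ is integrally indecomposable, and in the decomposable case the splitting $f_1=f+g$, $f_2=-g$ with $g$ supported on the witness points $\mathbf{w}_1,\ldots,\mathbf{w}_s$, condition (ii) guaranteeing that neither summand is divisible by any $x_i$, and two applications of the \nameref{Gao irreducibility criterion}. The difference is that the paper simply takes $g=\sum_{j}\mathbf{x}^{\mathbf{w}_j}$ with all coefficients equal to $1$ and never discusses cancellation, whereas you introduce free coefficients $c_j$ precisely to control it. Your worry is legitimate and in fact exposes a gap in the paper's argument rather than in yours: nothing in Definition \ref{Goldbach conjecture for polytopes} prevents some $\mathbf{w}_j$ from coinciding with a monomial $\mathbf{a}_i$ of $f$ whose coefficient is $-1$, in which case the paper's $f_1$ loses that monomial, $N(f_1)$ may be strictly smaller than $\conv(\mathbf{v}_1,\ldots,\mathbf{v}_r,\mathbf{w}_1,\ldots,\mathbf{w}_s)$, and condition (iii) no longer applies to it. Your coefficient selection (each $c_j$ avoiding at most the two values $0$ and $-\lambda_i$) closes this completely whenever $|K|\geq 3$. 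Over $\mathbb{F}_2$ the issue you flag is real and remains open in both arguments: your suggested escape of choosing witnesses with exponents disjoint from $\supp(f)$ is not licensed by the Goldbach condition as stated, since the definition only asserts the existence of \emph{some} witnesses. So either the definition should be strengthened (e.g.\ requiring the $\mathbf{w}_j$ to avoid $\supp(f)$, or at least the vertices of the combined polytope), or the statement should carry a hypothesis excluding this degenerate coincidence; as written, your proof is the more careful of the two and correctly isolates the only point where it, and the paper's proof, can fail.
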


\begin{proof}
Set $P:=N(f)=\conv (\mathbf{v}_1,\ldots,\mathbf{v}_r).$ If $P$ is integrally indecomposable, then by Gao irreducibility criterion we have that $f$ is absolutely irreducible. On the other hand, assume that $P$ is not integrally indecomposable. Since $P$ satisfies Goldbach condition, there is an integrally indecomposable polytope $Q:=\conv (\mathbf{w}_1,\ldots,\mathbf{w}_s)$ for some $\mathbf{w}_i\in\mathbb{N}^n$ such that $\conv (\mathbf{v}_1,\ldots,\mathbf{v}_r,\mathbf{w}_1,\ldots,\mathbf{w}_s)$ is integrally indecomposable. Moreover, since
\[
\bigcap_{i=1}^s \supp (\mathbf{w}_i)=\emptyset.
\]
we have that both
\[
f_1:=f+\sum_{i=1}^s \mathbf{x}^{\mathbf{w}_i},\quad f_2:=-\sum_{i=1}^s \mathbf{x}^{\mathbf{w}_i}
\]
are both non divisible by any of the $x_i$'s. Therefore, again by \nameref{Gao irreducibility criterion} we can guarantee that both $f_1$ and $f_2$ are absolutely irreducible and, by construction, $f=f_1+f_2,$ just what we finally wanted to prove.
\end{proof}

So, in our next result we will prove a weaker form of GC for $\mathcal{O}(\mathbb{A}_K^n)$, where $K$ denotes an arbitrary field, and where the number of irreducible polynomials in the decomposition depends on the number of (non-zero) terms of the particular polynomial $H$, but with the advantage that we construct in a more explicit manner the corresponding irreducible polynomials. The only slightly similar explicit result known by the authors in this direction is \cite[Corollary 1.5]{bodin2020schinzel}. However, whereas the proof of \cite[Corollary 1.5]{bodin2020schinzel} is not constructive, our proof is completely explicit and algorithmic, as we shall see soon.

\begin{theorem}\thlabel{teo1}
Let $K$ be any field. Let $\mathcal{O}(\mathbb{A}_K^n)=K[x_1,\ldots,x_n],$ with $n\geq 2$. Then, any polynomial $H$ in $\mathcal{O}(\mathbb{A}_K^n)$ with $r$ non-zero terms can be explicitly written as the sum of at most $2r$ absolutely irreducible polynomials. The zero polynomial can be written as the sum of two absolutely irreducibles.
\end{theorem}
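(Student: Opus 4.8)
The plan is to reduce the statement to a single monomial and then to exploit the two guaranteed variables $x_1,x_2$ in order to repair, via \nameref{Gao irreducibility criterion}, the obstruction that a monomial is divisible by the $x_i$'s. Writing $H=\sum_{j=1}^r c_j\mathbf{x}^{\mathbf{a}_j}$ with each $c_j\in K\setminus\{0\}$, it suffices to express each term $c_j\mathbf{x}^{\mathbf{a}_j}$ as a sum of two absolutely irreducible polynomials, since summing the $r$ decompositions then exhibits $H$ as a sum of at most $2r$ absolutely irreducible polynomials. The zero polynomial is disposed of at once by $0=x_1+(-x_1)$, both summands being linear and hence absolutely irreducible.

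First I would fix a term $t=c\mathbf{x}^{\mathbf{a}}$ and propose the explicit splitting
\[
g:=c\mathbf{x}^{\mathbf{a}}+x_1+x_2,\qquad h:=-(x_1+x_2),
\]
so that $t=g+h$ by construction. The summand $h$ has degree one, so it is absolutely irreducible, and everything reduces to showing the same for $g$.

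To handle $g$ I would split into cases according to $\mathbf{a}$. If $\mathbf{a}\in\{\mathbf{0},e_1,e_2\}$ then $g$ is linear and there is nothing left to prove; otherwise the line through $e_1$ and $e_2$ meets $\mathbb{N}^n$ only in $e_1,e_2$, so $\mathbf{a}$ lies off that line and $N(g)=\conv(\mathbf{a},e_1,e_2)$ is a genuine (non-degenerate) triangle, which I view as the pyramid over the base $\overline{e_1e_2}$ with apex $\mathbf{a}$. Then \thref{integrally indecomposable pyramid} reduces integral indecomposability to the condition $\gcd(\mathbf{a}-e_1,\mathbf{a}-e_2)=1$, which holds unconditionally because the first coordinates of $\mathbf{a}-e_1$ and $\mathbf{a}-e_2$ are the consecutive integers $a_1-1$ and $a_1$. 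Since moreover the monomials $x_1$ and $x_2$ prevent any single variable from dividing every term of $g$, the polynomial $g$ is not divisible by any $x_i$, and \nameref{Gao irreducibility criterion} yields that $g$ is absolutely irreducible.

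The conceptually delicate point is precisely this last case: a monomial whose support is all of $[n]$ leaves no ``free'' variable on which to build a pyramid, and it is the uniform correction by $x_1+x_2$ together with the elementary coprimality $\gcd(a_1-1,a_1)=1$ that resolves it without any case distinction on $\supp(\mathbf{a})$. The remaining verifications — the non-degeneracy of $\conv(\mathbf{a},e_1,e_2)$ and the non-divisibility hypothesis of \nameref{Gao irreducibility criterion} for $g$ — are routine, so I expect no further obstruction; in particular the resulting decomposition is manifestly explicit and algorithmic, exactly as the theorem advertises.
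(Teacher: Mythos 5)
Your proof is correct, but it takes a genuinely different (and more uniform) route than the paper's. The paper splits each monomial $a_{\mathbf{i}}\mathbf{x}^{\mathbf{i}}$ into cases according to whether $\gcd(\mathbf{i})=1$, $\gcd(\mathbf{i})>1$ or $\mathbf{i}=\mathbf{0}$: in the first case it uses the indecomposable segment $\overline{\mathbf{0}\mathbf{i}}$ and pushes a $+1$ into the constant term, and in the second it constructs an auxiliary exponent $\mathbf{w}$ (built from $p=\prod_{j\in\supp(\mathbf{i})}i_j+2$) and a hyperplane through $\mathbf{i}$ and $\mathbf{w}$ avoiding the origin, so that the pyramid of \thref{integrally indecomposable pyramid} has apex $\mathbf{0}$ over the base $\overline{\mathbf{i}\mathbf{w}}$; the coprimality comes from $\gcd(p,p+1)=1$, and the case $n=2$ needs separate sub-cases. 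You instead take the apex to be $\mathbf{a}$ and the base to be the fixed segment $\overline{e_1e_2}$, correcting uniformly by $\pm(x_1+x_2)$; the coprimality then comes for free from the consecutive first coordinates $a_1-1$ and $a_1$ of $\mathbf{a}-e_1$ and $\mathbf{a}-e_2$, and all case distinctions (on $\gcd(\mathbf{a})$, on $\supp(\mathbf{a})$, and on $n=2$ versus $n\geq 3$) disappear; the degenerate exponents $\mathbf{0},e_1,e_2$ are absorbed into the trivial observation that degree-one polynomials are absolutely irreducible. Your argument is characteristic-free, involves no division, and is just as explicit and algorithmic, so it proves \thref{teo1} in full; it even adapts to the Koyuncu setting of \thref{teo2}, since the vertex coefficients of $c\mathbf{x}^{\mathbf{a}}+x_1+x_2$ are $c$ and $1$. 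The only thing the paper's heavier construction buys is the specific family $W$ of auxiliary irreducibles of the form $\mathbf{x}^{\mathbf{i}}+\mathbf{x}^{\mathbf{w}}+1$ and $\mathbf{x}^{\mathbf{w}}+1$ that is reused later (in \thref{making irreducibility in a localization} and \thref{localization}); with your decomposition one would simply replace $W$ by the polynomials $c\mathbf{x}^{\mathbf{a}}+x_1+x_2$ and $x_1+x_2$.
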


\begin{proof}
We will distinguish essentially three cases among the monomial terms of $H$, and in each of the cases, we will explicitly generate the corresponding absolutely irreducible polynomials that will be used in the decomposition, where there will be a relatively wide spectrum of possibilities for choosing them each time.

Before giving the precise construction, we first explain how we plan to proceed.

\begin{itemize}

\item Starting from $\mathbf{i}\in\supp (f),$ let $H_i=a_{\mathbf{i}}\mathbf{x^{\mathbf{i}}}$ be a monomial of $H.$ We consider the origin $\mathbf{0}=(0,\ldots,0)\in\mathbb{R}^n.$

\item If $\gcd (\mathbf{i})=1,$ then the situation is quite easy to handle; indeed, because of \thref{integrally indecomposable pyramid} the line segment $\overline{\mathbf{0}\mathbf{i}}$ is integrally indecomposable and therefore \nameref{Gao irreducibility criterion} ensures that $\mathbf{x}^{\mathbf{i}}-1$ is an absolutely irreducible polynomial, so we have that
\[
a_{\mathbf{i}}\mathbf{x^{\mathbf{i}}}=(a_{\mathbf{i}}\mathbf{x^{\mathbf{i}}}-1)+(1),
\]
where we add the constant polynomial $(+1)$ to the independent term of $H$ (to be handled later as a particular case). So, we change the monomial 
$a_{\mathbf{i}}\mathbf{x^{\mathbf{i}}}$ by the absolutely irreducible monomial $a_{\mathbf{i}}\mathbf{x^{\mathbf{i}}}-1.$
\item Now, assume that $\gcd (\mathbf{i})>1,$ in this case the idea is to choose a point $\mathbf{w}$ and a hyperplane $G$ containing the line $\mathbf{i}\mathbf{w}$ but not containing the origin, such that we can apply \thref{integrally indecomposable pyramid} with the triangle $\triangle \mathbf{0}\mathbf{i}\mathbf{w}.$ 

\end{itemize}

Now, we give the details. First, let us assume that $n\geq 3$. Let $H_{\mathbf{i}}=a_{\mathbf{i}}x^{\mathbf{i}}$ be a monomial of $H$, where $\mathbf{i}=(i_1,\ldots,i_n)\in\mathbb{N}^n,$ and set $d:=\gcd(\mathbf{i}).$ We have to distinguish two cases.

\begin{enumerate}[(i)]

\item Assume that $d>1.$ Up to permutation of the variables we can assume, without loss of generality, that there exists a subindex $s$ with $i_s\neq 0$ such that $3\leq s\leq n$. Now, set
\[
p:=\left(\prod_{j\in\supp (\mathbf{i})}i_j\right)+2,
\]
and $\mathbf{w}:=(p,p+1,w_3,\ldots,w_{s-1},2i_sp,w_{s+1},\ldots,w_n),$ where the $w_t$ can be any natural number for any subindex $t\neq 1,2,s.$ Notice that, by construction, $p\geq 3$. On the other hand, let $G$ be the hyperplane
\[
i_s(1-2p)(x_1-i_1)+(p-i_1)(x_s-i_s)=0
\]
Note that for any (integer) values of $w_t,$ $\mathbf{i},\mathbf{w}\in G,$ but $\mathbf{0}=(0,\ldots,0)\notin G$. Indeed, let us explicitly check that $\mathbf{0}\notin G.$ Notice that $\mathbf{0}\in G$ if and only if
\[
i_s(1-2p)(-i_1)+(p-i_1)(-i_s)=0.
\]
Since $i_s\neq 0,$ this is equivalent to say that
\[
(1-2p)i_1+i_1-p=0\Longleftrightarrow i_1=\frac{p}{2(1-p)}.
\]
However, $i_1\geq 0$ whereas $\frac{p}{2(1-p)}<0,$ hence we reach a contradiction. Summing up, we have shown that $\mathbf{0}=(0,\ldots,0)\notin G$.

Now, let
\[
A_1:=H_i+\mathbf{x}^{\mathbf{w}}+1,\ A_2:=-\mathbf{x}^{\mathbf{w}}-1.
\]
Since the line segment $\overline{\mathbf{i}\mathbf{w}}$ lies inside the hyperplane $G$ and $\mathbf{0}\notin G,$ the triangle $\triangle\mathbf{0}\mathbf{i}\mathbf{w}$ satisfies the hypothesis of \thref{integrally indecomposable pyramid}. Therefore, $\triangle\mathbf{0}\mathbf{i}\mathbf{w}$ is integrally indecomposable if and only if $\gcd(\mathbf{0}-\mathbf{i},\mathbf{0}-\mathbf{w})=\gcd(\mathbf{i},\mathbf{w})=1.$ But this holds because $\gcd(\mathbf{i},\mathbf{w})|\gcd(p,p+1)=1.$ Moreover, by definition $A_1$ is not divisible by any of the $x_i$'s. Thus, by \nameref{Gao irreducibility criterion}, $A_1$ is absolutely irreducible. Similarly, by \cite[Corollary 4.3]{gao2001absolute}, the segment $\overline{\mathbf{0}\mathbf{w}}$ is integrally indecomposable since $\gcd(\mathbf{w}-\mathbf{0})=\gcd(\mathbf{w})=1$. So, again by \nameref{Gao irreducibility criterion} $A_2$ is also absolutely irreducible. Thus, $H_i=A_1+A_2$ can be written as the sum of two absolutely irreducible polynomials.
It is worth noting that we can apply the former argument for any permutation of the variables $x_1,x_2$ and $x_s$, and the original order of the variables should be taken into account for the final decomposition of $H$ as a sum of absolutely irreducible polynomials. 


\item $\mathbf{i}=\mathbf{0}.$ So, $H_i=c$ is a constant of $K.$ Then,  $c=(x_1+c)+(-x_1),$ where $x_1+c$ and $-x_1$ are trivially absolutely irreducible polynomials. We could also choose here the irreducible polynomials $x_1+x_2+c$ and $-x_1-x_2$, for given an explicit decomposition into (absolutely) irreducible polynomials being non monomials.\footnote{In the proof of \thref{localization} we can see more explicitly and in more detail why this fact can be relevant.}

\end{enumerate} 
Clearly the last two former cases applies as well for $n=2.$ In the first case, we only need to differentiate two simple sub-cases:

\begin{enumerate}[(a)]

\item Assume that $i_1\neq 0.$ In this case, $A_1=H_{\mathbf{i}}+x_1^{i_1}x_2^{(i_1+1)^{(i_2+1)}}+1$ and $A_2=-x_1^{i_1}x_2^{(i_1+1)^{(i_2+1)}}-1$ works, since $\gcd(i_1,(i_1+1)^{(i_2+1)})=1$,  $(i_1+1)^{(i_2+1)}\neq i_2,$ and the line connecting the points $(i_1,i_2)$ and $(i_1,(i_1+1)^{(i_2+1)})$ does not meet the origin $(0,0).$ Note that we can use as well any pair of the form $(i_1^{w_1+1},(i_1+1)^{(i_2+1)+w_2})$, for any $w_1,w_2\in \mathbb{N}.$

\item A similar procedure applies when $i_2\neq 0.$

\end{enumerate}
Summing up, for each non-zero term of $H$ we generate either one or two explicit absolutely irreducible polynomials in the decomposition of $H$. Thus, $H$ can be explicitly expressed as the sum of at most $2r$ absolutely irreducible polynomials.
\end{proof}

\begin{remark}
From the former proof we can deduce straightforwardly that there are infinitely many decompositions of a fixed polynomial in \thref{teo1} as the sum of absolutely irreducible polynomials.
\end{remark}
In \thref{teo1}, we show that any polynomial with $r$ non--zero monomials can be expressed as the sum of at most $2r$ absolutely irreducible polynomials. The reader might think that, in general, $2r$ is a very pessimistic upper bound. However, it turns out that sometimes this upper bound is sharp, as the following remark illustrates, among other things.

\begin{remark}
Note that in the generality of \thref{teo1}, the classic stronger form of Goldbach's conjecture (i.e., with two prime polynomial summands), where the degree of the summands is bounded by the degree of the original polynomial is not true. For example, taking inspiration from \cite[\S 1]{bodin2020schinzel}, let $R=\mathbb{F}_2[x]$, and consider $f(x)=x^2+x.$ Then, due to the elementary fact the only irreducible quadratic polynomial in $R$ is $x^2+x+1,$ we see that $f(x)$ cannot be written as the sum of two quadratic or linear irreducible polynomials in $R$. Nonetheless, our constructive proof of \thref{teo1} gives the explicit description of $f(x)=(x^2+x+1)+(x+1)+(x)$ as the explicit sum of three irreducibles.
\end{remark}
The proof of \thref{teo1}, replacing \nameref{Gao irreducibility criterion} by \nameref{Koyuncu irreducibility criterion}, gives also us the following result.

\begin{theorem}\thlabel{teo2}
Let $R$ be a commutative ring, and let $\mathcal{O}(\mathbb{A}_R^n)=R[x_1,\ldots,x_n],$ with $n\geq 2$. Then, any polynomial $H$ in $\mathcal{O}(\mathbb{A}_R^n)$ with $r$ non-zero terms that are non--zero divisors of $R$ can be explicitly written as the sum of at most $2r$ absolutely irreducible polynomials. The zero polynomial can be written as the sum of two absolutely irreducibles.
\end{theorem}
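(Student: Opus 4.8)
The plan is to observe that \thref{teo2} is obtained from \thref{teo1} by a direct translation of the proof, systematically replacing \nameref{Gao irreducibility criterion} with its ring-theoretic analogue, \nameref{Koyuncu irreducibility criterion}. The only genuine difference between the two criteria is that Koyuncu's version requires an additional hypothesis: the coefficients attached to the terms forming the \emph{vertices} of the relevant Newton polytope must be non-zero divisors of $R$. Thus the whole proof strategy is to verify that, in each of the explicit polytope constructions carried out in the proof of \thref{teo1}, this vertex-coefficient condition is automatically satisfied under the stated hypothesis that the $r$ non-zero terms of $H$ are non-zero divisors.

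First I would fix a monomial $H_{\mathbf{i}}=a_{\mathbf{i}}\mathbf{x}^{\mathbf{i}}$ of $H$ and re-run the case analysis exactly as in \thref{teo1}. In the generic case $\gcd(\mathbf{i})>1$ (for $n\geq 3$), the construction produces
\[
A_1:=H_{\mathbf{i}}+\mathbf{x}^{\mathbf{w}}+1,\quad A_2:=-\mathbf{x}^{\mathbf{w}}-1,
\]
and the associated polytopes are the triangle $\triangle\mathbf{0}\mathbf{i}\mathbf{w}$ and the segment $\overline{\mathbf{0}\mathbf{w}}$. The key point is that the vertices of $N(A_1)$ correspond to the terms $a_{\mathbf{i}}\mathbf{x}^{\mathbf{i}}$, $\mathbf{x}^{\mathbf{w}}$, and the constant $1$, whose coefficients are $a_{\mathbf{i}}$, $1$, and $1$ respectively; by hypothesis $a_{\mathbf{i}}$ is a non-zero divisor and $1$ is always a non-zero divisor, so the vertex-coefficient condition of \nameref{Koyuncu irreducibility criterion} holds. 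The same remark applies to $A_2$, whose vertices carry coefficients $-1$ and $-1$. Hence Koyuncu's criterion applies verbatim, and $A_1$, $A_2$ are absolutely irreducible over every ring extension $R\subseteq S$. I would then repeat this check for the case $\gcd(\mathbf{i})=1$ (where one uses $\mathbf{x}^{\mathbf{i}}-1$, with vertex coefficients $a_{\mathbf{i}}$ and $-1$), for the constant term case $\mathbf{i}=\mathbf{0}$ (where $c=(x_1+c)+(-x_1)$ has vertex coefficients among $\{c,1,-1\}$, and $c$ is a non-zero divisor by hypothesis), and for the two planar sub-cases (a) and (b) when $n=2$, each of which again produces only the coefficients $a_{\mathbf{i}}$ and $\pm 1$ at the vertices.

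The step I expect to require the most care, rather than being a true obstacle, is confirming that in every construction the \emph{only} coefficients appearing at vertices of the Newton polytopes are either $\pm 1$ or one of the original coefficients $a_{\mathbf{i}}$ of $H$ — never a sum or product of several coefficients that could fail to be a non-zero divisor even when each factor is. This is ensured by the shape of the constructions: each auxiliary monomial $\mathbf{x}^{\mathbf{w}}$ (or $\mathbf{x}^{\mathbf{i}}$, or the auxiliary $x_1$, $x_1^{i_1}x_2^{(i_1+1)^{(i_2+1)}}$, etc.) is chosen with coefficient $\pm 1$, and no two distinct terms of $A_1$ or $A_2$ ever share the same multidegree, so no cancellation or coefficient-merging occurs at the vertices. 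Once this combinatorial observation is recorded, the conclusion $H=\sum A_j$ as a sum of at most $2r$ absolutely irreducible polynomials follows from summing the local decompositions exactly as before, so I would keep the write-up short, pointing the reader to the proof of \thref{teo1} and merely flagging the vertex-coefficient verification at each step.
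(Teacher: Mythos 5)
Your overall strategy (replace Gao's criterion by Koyuncu's and check that every vertex of every Newton polytope appearing in the construction carries a coefficient that is either a unit or one of the original $a_{\mathbf{i}}$'s) is the right idea, but you have kept the one step of the proof of \thref{teo1} that actually breaks over a general commutative ring. In \thref{teo1}, each monomial with $\gcd(\mathbf{i})=1$ is rewritten as $(a_{\mathbf{i}}\mathbf{x}^{\mathbf{i}}-1)+1$ and the stray $+1$ is pushed into the constant term of $H$, to be dealt with at the very end. Over a field this is harmless: whatever constant $c'$ accumulates, $x_1+c'$ is still irreducible. Over a ring it is not: if $H$ has $k$ monomials with $\gcd(\mathbf{i})=1$ and original constant term $c$, the constant you must decompose at the end is $c+k\cdot 1_R$, and nothing in the hypotheses prevents this from being zero or a zero divisor even though $c$ itself is a non--zero divisor. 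For instance, in $(\mathbb{Z}/6\mathbb{Z})[x,y]$ with $H=x+1$ your procedure leaves the constant $2$ to be handled at the end, and $x+2=(3x+2)(2x+1)$ is reducible in $(\mathbb{Z}/6\mathbb{Z})[x,y]$; Koyuncu's criterion cannot be applied to $x_1+c'$ because its vertex coefficient $c'$ is a zero divisor, and the final summand $(x_1+c')+(-x_1)$ need not be a sum of irreducibles at all. Your claim that ``$c$ is a non--zero divisor by hypothesis'' only applies to the original constant term of $H$, not to the accumulated one.

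The paper's proof consists of exactly one modification to \thref{teo1} that repairs this: it drops the $\gcd(\mathbf{i})=1$ shortcut altogether and always uses the triangle construction $A_1=H_{\mathbf{i}}+\mathbf{x}^{\mathbf{w}}+1$, $A_2=-\mathbf{x}^{\mathbf{w}}-1$, whose added units cancel within the pair and therefore never contaminate the constant term of $H$ (the value of $\gcd(\mathbf{i})$ plays no role in that construction, so nothing is lost). With that single change, your vertex--coefficient verification goes through exactly as you describe. So the fix is small, but it is precisely the content of the proof of \thref{teo2}, and as written your argument has a genuine gap at the constant--term step.
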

\begin{proof}

The proof is essentially the same as the given in \thref{teo1}, except that independently of the value of $\gcd (\mathbf{i})$, we will always proceed as if $\gcd (\mathbf{i})\geq 2$. In other words, we will always generate the integrally indecomposable triangle by adding and subtracting a suitable monomial with multi-exponent $\mathbf{w}$, getting expressions of the form 
\[
A_1:=H_i+\mathbf{x}^{\mathbf{w}}+1,\ A_2:=-\mathbf{x}^{\mathbf{w}}-1.
\]
Note that in the corresponding construction of the former polynomials the specific value of $\gcd (\mathbf{i})$ is not relevant. We proceed in the former way because we need to avoid adding units several times to the independent term of $H$, because the resulting constant polynomial can be a zero divisor.
\end{proof}
\thref{teo1} also holds if we replace absolutely irreducible by irreducible, $K$ by any integral domain, and $2r$ by $2$. Let us state this fact explicitly, which is an immediate consequence of \cite[Theorem 2]{pollack2011polynomial}.

\begin{proposition}\thlabel{our result using Pollack theorem}
Let $D$ be an integral domain, and $R=D[x_1,\ldots,x_n],$ with $n\geq 2$. Then, any polynomial in $R$ can be written as the sum of two irreducible polynomials.
\end{proposition}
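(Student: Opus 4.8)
The plan is to derive this directly from Pollack's theorem (\cite[Theorem 2]{pollack2011polynomial}), which is precisely the statement that Goldbach's conjecture holds for polynomial rings over an integral domain in a way that matches our claim. Since the proposition is stated as ``an immediate consequence'' of that result, my first step is to identify the exact hypotheses of Pollack's theorem and verify that the ring $R=D[x_1,\ldots,x_n]$ with $n\geq 2$ satisfies them. Pollack's Theorem 2 concerns rings of the form $D[x]$ where $D$ is itself a (suitable) integral domain; the key observation that makes the reduction work is that a polynomial ring in $n\geq 2$ variables can always be viewed as a polynomial ring in one variable over a smaller polynomial ring.

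Concretely, I would write $R=D[x_1,\ldots,x_n]=D'[x_n]$, where $D':=D[x_1,\ldots,x_{n-1}]$. The crucial point here is that $D'$ is again an integral domain (a polynomial ring over an integral domain is an integral domain), and moreover, because $n\geq 2$, we have $n-1\geq 1$, so $D'$ is a genuine polynomial ring and in particular an integral domain with infinitely many non--associate irreducible elements, which is the typical hypothesis needed to invoke Pollack's machinery. I would then apply \cite[Theorem 2]{pollack2011polynomial} to the one--variable polynomial ring $D'[x_n]$: any element of $D'[x_n]=R$ can be written as the sum of two irreducible polynomials in $D'[x_n]$, and irreducibility in $D'[x_n]=R$ is exactly irreducibility in $R$, giving the claim.

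The main obstacle, and the step that deserves the most care, is confirming that the precise formulation of \cite[Theorem 2]{pollack2011polynomial} applies to $D'[x_n]$ without additional hypotheses on $D$. I would check whether Pollack requires $D'$ merely to be an integral domain, or whether some extra condition (such as being a domain that is not a field, or having infinitely many maximal ideals or irreducibles) is imposed. In the case $n\geq 2$ the intermediate ring $D'=D[x_1,\ldots,x_{n-1}]$ has $n-1\geq 1$ variables and hence automatically possesses infinitely many pairwise non--associate irreducible elements (for instance $x_1, x_1+1, x_1+2,\ldots$ together with the irreducibles arising from the polynomial structure), so any such mild hypothesis is satisfied. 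This is exactly why the restriction $n\geq 2$ appears: it guarantees that the base ring $D'$ over which we apply the one--variable result is itself a nontrivial polynomial ring rather than the field or domain $D$ that might fail the hypotheses.

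In summary, the proof is a one--line reduction: set $D'=D[x_1,\ldots,x_{n-1}]$, note $D'$ is an integral domain that is a polynomial ring in at least one variable (using $n\geq 2$), and apply \cite[Theorem 2]{pollack2011polynomial} to $R=D'[x_n]$ to obtain the decomposition into two irreducibles. I expect no genuine difficulty beyond the bookkeeping of matching hypotheses, and the whole argument should occupy only a few sentences.
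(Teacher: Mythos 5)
Your reduction $R=D'[x_n]$ with $D':=D[x_1,\ldots,x_{n-1}]$ is precisely how the paper intends the proposition to follow from Pollack's result; the paper offers no argument beyond the citation, so your write-up is the same approach, only more explicit about why $n\geq 2$ is needed. The one point you should not dismiss as bookkeeping is not the hypothesis on the base ring but the degree restriction in the conclusion: Pollack's Theorem 2 is a Hayes-type statement, producing two irreducibles of degree $m$ for a polynomial of degree $m\geq 1$ in the distinguished variable, and the paper itself uses it in exactly that form elsewhere (in the forcing-algebra proposition and in the SGC proposition, where constant polynomials are handled by a separate ad hoc decomposition). As written, your appeal to the theorem covers every $H\in R$ with $\deg_{x_n}H\geq 1$ but says nothing about the elements of $D'=D[x_1,\ldots,x_{n-1}]$ themselves, which are ``constants'' from the point of view of $D'[x_n]$. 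The repair is immediate: if such an $H$ has positive degree in some $x_j$, rerun your argument with $x_j$ as the distinguished variable; and if $H=c\in D$ is a genuine constant, write $c=x_1+(c-x_1)$, both summands being irreducible of degree one over the integral domain $D[x_2,\ldots,x_n]$. With that sentence added your proof is complete and coincides with the paper's.
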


The disadvantage with \thref{our result using Pollack theorem} is that the proof of \cite[Theorem 2]{pollack2011polynomial} does not  provide a direct way to compute the irreducible ones as fast as the proof of \thref{teo1}.

\section{An Algorithmic Implementation for the Decomposition of Polynomials into Explicit (Absolutely) Irreducible Additive Factors}\label{implemented result section}

Now, we want to illustrate the explicit decomposition found in \thref{teo1} through some examples. The unjustified calculations were done with an algorithm, implemented in Macaulay2 \cite{M2}, that computes such decompositions.\footnote{The interested reader can see the explicit implementation of the algorithm in the following link https://github.com/DAJGomezRamirez/GoldbachDecompostionForPolynomialRings} Our first example is borrowed from \cite[page 102]{GaoLauderalgorithm}.

\begin{example}\label{Gao Lauder example}
Let $R=\mathbb{Q}[x,y],$ and let $f=xy+x+y+1.$ In this case, its Newton polygon is the square given by vertices $(0,0),(1,0),(0,1),(1,1).$ As pointed out in \cite[page 102]{GaoLauderalgorithm}, this polygon is not integrally indecomposable, in fact it has four integral summands. First of all, we exhibit here how our method works in Macaulay2, and afterwards we explain the meaning of the different outputs we obtain.
\begin{verbatim}

i1 : load "goldbach_algorithm.m2";

i2 : R=QQ[x,y];

i3 : f=x*y+x+y+1;

i4 : explicitirreducible(f)

The monomials of
 
x*y + x + y + 1
 
are
 
{x*y, x, y, 1}
 
The exponent set of
 
x*y + x + y + 1
 
is
 
| 1 1 |
| 1 0 |
| 0 1 |
| 0 0 |
 
The w points are
 
| 1 4 |
| 1 2 |
| 2 1 |
| 1 0 |
 
The corresponding monomials given by the w points are
 
    4     2   2
{x*y , x*y , x y, x}
 
The corresponding absolutely irreducible polynomials are
 
    4         2       2
{x*y  + 1, x*y  + 1, x y + 1, x}
 
and also
 
    4               2           2
{x*y  + x*y + 1, x*y  + x + 1, x y + y + 1, x + 1}

\end{verbatim}
On the one hand, the first matrix appearing as output is the one whose rows are exactly the vertices of the Newton polygon of $f.$ On the other hand, our second matrix produces an explicit choice of the points $\mathbf{w}$ constructed along \thref{teo1}. Explicitly, we have that
\begin{align*}
& xy=(xy+xy^4+1)+(-xy^4-1),\quad x=(x+xy^2+1)+(-xy^2-1),\\
& y=(y+x^2y+1)+(-x^2y-1),\ 1=(1+x)+(-x).
\end{align*}
\end{example}

\begin{example}
Let us consider an example with coefficients different from one. Let $R=\mathbb{Q}[x,y,z],$ and let $f=5x^2+3y^2-7z^2-5.$ Its Newton polytope is the tetrahedron given by vertices $(0,0,0),(2,0,0),(0,2,0),(0,0,2).$ In this case, we obtain:

\begin{verbatim}

i1 : load "goldbach_algorithm.m2";

i2 : R=QQ[x,y,z];

i3 : g=2*x^2+3*y^2-7*z^2+5;

i4 : explicitirreducible(g)

The monomials of
 
  2     2     2
2x  + 3y  - 7z  + 5
 
are
 
  2   2   2
{x , y , z , 1}
 
The exponent set of
 
  2     2     2
2x  + 3y  - 7z  + 5
 
is
 
| 2 0 0 |
| 0 2 0 |
| 0 0 2 |
| 0 0 0 |
 
The w points are
 
| 4 4 5 |
| 4 4 5 |
| 4 5 4 |
| 1 0 0 |
 
The corresponding monomials given by the w points are
 
  4 4 5   4 4 5   4 5 4
{x y z , x y z , x y z , x}
 
The corresponding absolutely irreducible polynomials are
 
  4 4 5       4 4 5       4 5 4
{x y z  + 1, x y z  + 1, x y z  + 1, x}
 
and also
 
  4 4 5     2       4 4 5     2       4 5 4     2
{x y z  + 2x  + 1, x y z  + 3y  + 1, x y z  - 7z  + 1, x + 5}
\end{verbatim}
\end{example}
The last example we plan to consider is to illustrate a calculation involving the equation of a projective cubic hypersurface in three variables with mixed terms.

\begin{example}\label{some mixed terms}
Let $R=\mathbb{Q}[x,y,z]$, and let $f=x^3+3x^2y-4y^3+6z^3$ In this case, we obtain:
\end{example}

\begin{verbatim}
i1 : load "goldbach_algorithm.m2";

i2 : R=QQ[x,y,z];

i3 : f=x^3+3*x^2*y-4*y^3+6*z^3;

i4 : explicitirreducible(f)

The monomials of
 
 3     2      3     3
x  + 3x y - 4y  + 6z
 
are
 
  3   2    3   3
{x , x y, y , z }
 
The exponent set of
 
 3     2      3     3
x  + 3x y - 4y  + 6z
 
is
 
| 3 0 0 |
| 2 1 0 |
| 0 3 0 |
| 0 0 3 |
 
The w points are
 
| 6 5 6 |
| 4 4 5 |
| 5 6 6 |
| 5 6 6 |
 
The corresponding monomials given by the w points are
 
  6 5 6   4 4 5   5 6 6   5 6 6
{x y z , x y z , x y z , x y z }
 
The corresponding absolutely irreducible polynomials are
 
  6 5 6       4 4 5       5 6 6       5 6 6
{x y z  + 1, x y z  + 1, x y z  + 1, x y z  + 1}
 
and also
 
  6 5 6    3       4 4 5     2        5 6 6     3       5 6 6     3
{x y z  + x  + 1, x y z  + 3x y + 1, x y z  - 4y  + 1, x y z  + 6z  + 1}

\end{verbatim}
\section{Weak Forms of Goldbach Conjecture over the Integers and Its Localizations}\label{section on Goldbach conjecture and localization}

Let us consider our original equation \eqref{seminal2} in the case that $n_2$ can vary freely with $H$. This case can be considered as a weak form of Goldbach's conjecture. Although a proof of this kind of conjecture over polynomial rings in several variables (as in \thref{teo1}) is not difficult, it is not so immediate and trivially straightforward. However, over classic rings of integers, it is a trivial fact. In other words, by adding enough copies of $2$-s and $3$-s, we can express any natural number bigger than one as finite sum of primes. A little more refined way of obtaining this decomposition with more than two primes is by using inductively Bertrand's Postulate stating that for any real $x\geq 1$ there exists a prime number between $x$ and $2x$ (see either \cite{ramanujan1919proof} or \cite{Bertrandpostulateinductiveproof}).

Now, if we want to extend this result to non-trivial localizations of $\mathbb{Z}$, then we need to be very careful, since localizing we used to lose prime numbers at disposition. So, the direct trick of adding copies of $2$-s and $3$-s cannot be used anymore. Even more, we can lose infinitely many prime numbers on particular localizations of $\mathbb{Z}$, so, Bertrand's postulate would not be a strong tool for generating the precise representations into prime additive factors that we could need. 

Let $S$ be a non-trivial multiplicative system of $\mathbb{Z}$, i.e., $S\neq \{1\},\mathbb{Z}^ *$.\footnote{Note that in the first trivial case the localization is equal to $\mathbb{Z}$, and in the second trivial case, the localization is the field of fractions of $\mathbb{Z}$, i.e., $\mathbb{Q}$. Now, by definition, no form of Goldbach's conjecture is true over any field, because fields contain no prime numbers (or irreducibles).} Then, the weak form of Goldbach's conjecture over the ring $S^{-1}\mathbb{Z}$ is equivalent to say that for all (positive) integer $a$, there exist a natural number $m\in\mathbb{N}$, $s,s_1,\ldots,s_m\in S$ and prime numbers $p_1,\ldots,p_m$ not belonging to $S$ such that 
\[
sa=\sum_{i=1}^ms_ip_i.
\]
Note that the last expression seems to be highly more constrained than the original version over the integers. Effectively, in the localization we can lose almost arbitrarily large collections of prime additive generators although we gain certain additional multiplicative freedom in terms of the elements of the multiplicative system $S$, which is caused by the fact that on $S^{-1}\mathbb{Z}$ each element possesses infinitely many associates. 

In conclusion, at first sight there is no global manner of extend the weak form of Goldbach conjecture on $S^{-1}\mathbb{Z}$. Nonetheless, we can ask ourselves how are distributed topologically into the real (or rational) numbers the collection of elements of $S^{-1}\mathbb{Z}$ that can be written as a finite sum of primes. To answer this question we obtain the following initial positive result.

\begin{theorem}\thlabel{density in localization}
Let $S$ be a non-trivial multiplicative system of $\mathbb{Z}$, $\mathbb{P}_S$ be the set of prime numbers not belonging to $S$, and set
\[
G:=\{w=\sum_{i=1}^m \frac{s'_ip_i}{s_i}:s'_1,s_1,\ldots,s'_m,s_m\in S,\ p_1,\ldots,p_m\in \mathbb{P}_S\}\subseteq S^{-1}\mathbb{Z}.
\]
Then, $G$ is a dense subset of $\mathbb{R}$.
\end{theorem}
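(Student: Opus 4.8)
The plan is to exploit two structural features of $G$: first, that $G$ is closed under addition, which is immediate from the definition since concatenating two representations yields a third; and second, that $G$ contains nonzero elements of arbitrarily small absolute value. Granting these, density in $\mathbb{R}$ follows from an elementary grid argument. If $\delta\in G$ is positive and smaller than a prescribed $\epsilon>0$, then the multiples $\delta,2\delta,3\delta,\ldots$ all lie in $G$ and form an arithmetic progression of step $<\epsilon$ sweeping out $(0,\infty)$, so every nonnegative real is within $\epsilon$ of $G$; symmetrically, a negative element of $G$ of absolute value $<\epsilon$ handles $(-\infty,0]$. The cleanest reformulation, which I would ultimately prefer, is to show that $G$ is a subgroup of $(\mathbb{R},+)$ accumulating at $0$, and then invoke the classical dichotomy: a subgroup of $\mathbb{R}$ is either discrete of the form $c\mathbb{Z}$ or dense, and accumulation at $0$ rules out the discrete case.

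First I would extract the two ingredients from the non--triviality of $S$ (so that, as the authors note, $S^{-1}\mathbb{Z}$ is neither $\mathbb{Z}$ nor $\mathbb{Q}$). On the one hand, $S$ must contain a non--unit $s_0$ with $|s_0|\geq 2$, for otherwise $S\subseteq\{\pm 1\}$ and $S^{-1}\mathbb{Z}=\mathbb{Z}$. On the other hand, $\mathbb{P}_S\neq\emptyset$, since if every prime belonged to $S$ then $S$ would contain every nonzero integer up to sign and $S^{-1}\mathbb{Z}$ would collapse to $\mathbb{Q}$. Fixing such a prime $p\in\mathbb{P}_S$ and such a non--unit $s_0$, the elements $\frac{1\cdot p}{s_0^{2k}}\in G$ (taking $m=1$, $s'=1$, $s=s_0^{2k}$) are positive and tend to $0$ as $k\to\infty$, supplying the required small positive elements regardless of the sign of $s_0$.

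The main obstacle is the question of \emph{sign}. Each summand $\frac{s_i' p_i}{s_i}$ is a positive prime times a quotient of elements of $S$, so if $S$ consisted only of positive integers then every element of $G$ would be positive and $G$ could not be dense in all of $\mathbb{R}$. The argument therefore genuinely requires a source of negativity in $S$, which I would supply by working under the standard convention that a multiplicative system contains the units $\pm 1$ of $\mathbb{Z}$, noting that adjoining the unit $-1$ does not alter the localization. With $-1\in S$ negation becomes available termwise: from $g=\sum_i\frac{s_i' p_i}{s_i}\in G$ one obtains $-g=\sum_i\frac{(-s_i') p_i}{s_i}\in G$, since $-s_i'\in S$. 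Hence $G$ is stable under both addition and negation, i.e.\ it is a subgroup of $(\mathbb{R},+)$, and combined with the small positive elements produced above, the subgroup dichotomy yields density. I would flag this sign point as the crux of the proof, since it is precisely where an implicit hypothesis on $S$ (the presence of a negative element, or equivalently the convention $-1\in S$) is indispensable.
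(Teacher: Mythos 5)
Your proof is correct and, at its core, runs on the same two facts as the paper's: that a positive prime $p\in\mathbb{P}_S$ and a non--unit $s_0\in S$ exist by non--triviality of $S$, and that the resulting primes $p/s_0^{2k}$ of $S^{-1}\mathbb{Z}$ are arbitrarily small positive elements whose integer multiples (sums of copies of a single prime) lie in $G$. The paper executes exactly the ``grid argument'' of your first paragraph: given an interval $(x_0,y_0)$, it picks $e$ with $p/n_0^{e}<y_0-x_0$ (where $n_0=\min\{s\in S: s>1\}$) and locates the multiple $(n-1)p/n_0^{e}$ inside the interval by a minimality choice of $n$. Your repackaging via the subgroup dichotomy (a subgroup of $(\mathbb{R},+)$ accumulating at $0$ is dense) is a clean equivalent formulation but requires the extra step of closing $G$ under negation, which is where your observation about signs enters. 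That observation is genuinely well taken: as literally defined, $G$ consists of sums $\sum s_i'p_i/s_i$ with $s_i'\in S$, so if $S$ contains only positive integers then $G\subseteq(0,\infty)$ and cannot be dense in all of $\mathbb{R}$; the statement needs either the convention $-1\in S$ (harmless for the localization) or the reading of $G$ as ``finite sums of irreducibles of $S^{-1}\mathbb{Z}$,'' which includes the negative associates $-s'p/s$. The paper's own argument quietly has the same issue for intervals contained in $(-\infty,0)$, since there $n-1$ would have to be negative and the sum $\sum_{i=1}^{n-1}p/n_0^e$ is meaningless; so your flagged ``crux'' is not a defect of your write--up but a point on which the published proof is also incomplete. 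With the convention $-1\in S$ made explicit, both arguments are complete.
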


\begin{proof}
First, note that $S^{-1}\mathbb{Z}$ is a unique factorization domain whose primes (i.e., irreducibles) are the elements of the form $s'p/s$, where $p\in \mathbb{P}_S$ and $s',s\in S$.\footnote{For several proofs of this fact in other context, see, for example, the proof of \thref{localization}.} So, $G$ is exactly the collection of elements of the localization that can be written as a finite sum of primes. In this way, in order to see that $G$ is dense in $\mathbb{R}$, it is enough to show that any open interval of real numbers contains an element of $G$. This is what we plan to prove in what follows.

Indeed, let $(x_0,y_0)\subseteq \mathbb{R}$ be an open real interval, and set $n_0=\min\{s\in S: s>1\}$. Note that $S\cap\mathbb{N}\neq\emptyset$ because $S$ is a non--trivial multiplicative set. Therefore, $S\cap\mathbb{N}$ is a non--empty subset of natural numbers, hence has a minimum by the well order of $\mathbb{N}$. This justifies that $n_0$ is well defined. Now, fix $p\in \mathbb{P}_S$, with $p>0$. Note that we can choose such $p$ because if not, then all the prime numbers of $\mathbb{Z}$ would be in $S$. Thus, by the Fundamental Theorem of Arithmetic $S=\mathbb{Z}^*$, contradicting the non-triviality of $S$. Choose $e\in \mathbb{N}$ such that $p/n_0^e<y_0-x_0.$ Set $n=\min\{k\in \mathbb{N}: kp/n_0^e\geq y_0\}.$ By definition of $n$, it holds $y_0>(n-1)p/n_0^e$. On the other hand, by the former definitions we obtain that $x_0-y_0<-p/n_0^e$ and $y_0\leq np/n_0^e.$ Thus, adding the former inequalities we get 
\[
x_0=y_0+(x_0-y_0)<\frac{np}{n_0^e}-\frac{p}{n_0^e}=\frac{(n-1)p}{n_0^e}.
\]
Summing up, we have
\[
x_0<\frac{(n-1)p}{n_0^e}<y_0.
\]
Moreover, as observed before, $p/n_0^e$ is a prime in $S^{-1}\mathbb{Z}$, and
\[
\frac{(n-1)p}{n_0^e}=\sum_{i=1}^{n-1}\frac{p}{n_0^e}\in G.
\]
Summing up, we have shown that the interval $(x_0,y_0)$ contains an element of $G$. This finally shows that $G$ is dense in $\mathbb{R}$, just what we finally wanted to show.
\end{proof}

From \thref{density in localization} we can derive a natural and much more interesting result involving series of primes, within the real numbers, into localizations of the integers. The proof is very much like the one to justify that any real number can be expressed as a convergent series of rational ones, and therefore it is left to the interested reader.

\begin{theorem}\thlabel{series of localized primes}
Let $S$ be a non-trivial multiplicative system of $\mathbb{Z}$, and let $\mathbb{P}_S$ be the set of prime numbers not belonging to $S$. Then, any element $v\in S^{-1}\mathbb{Z}$ (resp. any element $v\in \mathbb{R}$) can be written as a convergent series of primes in $S^{-1}\mathbb{Z}$. In other words, there exists $s'_1,s_1,\ldots,s'_n,s_n,\ldots\in S$ and $p_1,\ldots,p_n,\ldots\in \mathbb{P}_S$ such that 
\[
v=\sum_{i=1}^{\infty}\frac{s'_ip_i}{s_i}.
\]
\end{theorem}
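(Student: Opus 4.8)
The plan is to prove the assertion for an arbitrary $v\in\mathbb{R}$, which is the stronger of the two statements, since $S^{-1}\mathbb{Z}\subseteq\mathbb{R}$ and hence any $v\in S^{-1}\mathbb{Z}$ is already covered by the real case. The construction reuses the explicit small primes produced in the proof of \thref{density in localization}: fixing a positive prime $p\in\mathbb{P}_S$ and setting $n_0:=\min\{s\in S:\ s>1\}$, each $p/n_0^e$ is a prime of $S^{-1}\mathbb{Z}$; these are positive and tend to $0$ as $e\to\infty$. Because $-1$ is a unit, each $-p/n_0^e$ is a prime as well, so primes of arbitrarily small absolute value are available with either sign. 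By replacing $v$ with $-v$ and negating every prime subsequently used, I may therefore assume without loss of generality that $v\ge 0$.

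The idea is then a greedy approximation from below using only the positive primes $p/n_0^e$. Set $r_0:=v$; having built a finite partial sum with nonnegative remainder $r_{n-1}=v-(\text{current partial sum})$ and assuming $r_{n-1}>0$, choose $e_n$ with $0<p/n_0^{e_n}<\min\{2^{-n},r_{n-1}\}$, which is possible since $p/n_0^e\to 0$, and append the block consisting of $k_n:=\lfloor r_{n-1}/(p/n_0^{e_n})\rfloor\ge 1$ copies of the prime $p/n_0^{e_n}$. The new remainder then satisfies $0\le r_n<p/n_0^{e_n}<2^{-n}$, so concatenating all the blocks yields an infinite sequence of primes of $S^{-1}\mathbb{Z}$ whose partial sums increase monotonically and stay bounded above by $v$.

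The step that requires the most care is confirming that the resulting \emph{series} genuinely converges to $v$, rather than merely some subsequence of its partial sums; the choice to add same-signed primes is precisely what renders this automatic, since a monotone nondecreasing sequence bounded above by $v$ converges, and as $r_n<2^{-n}\to 0$ its limit is exactly $v$. It remains to dispose of the degenerate cases: if at some stage $r_{n-1}=0$ the process has already written $v$ as a \emph{finite} sum of primes, which I turn into an honest infinite series by appending the telescoping tail $p/n_0,\,-p/n_0,\,p/n_0^2,\,-p/n_0^2,\,\ldots$, whose partial sums converge to $0$; the same tail represents the case $v=0$ directly. This completes the construction, and the resulting representation $v=\sum_{i\ge 1}s_i'p_i/s_i$ has the required form, with $p_i=p\in\mathbb{P}_S$, $s_i=n_0^{e_i}\in S$ and $s_i'\in\{\pm 1\}$.
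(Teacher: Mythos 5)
Your construction is correct and is precisely the argument the paper intends but leaves to the reader: it reuses the small primes $p/n_0^e$ produced in the proof of \thref{density in localization} and upgrades density to a convergent series via a greedy, monotone approximation from below, with the telescoping tail $p/n_0,-p/n_0,p/n_0^2,-p/n_0^2,\ldots$ disposing of the case where the process terminates (including $v=0$). One caveat worth recording: for $v<0$ and in the telescoping tail you take $s_i'=-1$, which need not belong to $S$, so your summands are primes of $S^{-1}\mathbb{Z}$ (associates of the form $\pm s'p/s$) rather than literally of the displayed form with $s_i'\in S$ --- but this is consistent with the theorem's first sentence and with the paper's subsequent remark about associates, and indeed the displayed form cannot hold verbatim for negative $v$ when every element of $S$ is positive, so the statement itself, not your proof, is what needs the adjustment.
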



\begin{remark}
Notice that \thref{series of localized primes} holds even when $S^{-1}\mathbb{Z}$ possesses only a prime number. Effectively, in such a case the corresponding series would contain suitable forms of (infinitely many) associates of the single prime element. In fact, in the particular case that $S$ consists exactly of the powers of a prime number $p$, we can obtain a more concrete and explicit form of the summands given in \thref{series of localized primes}. Let us state precisely what we mean in the following corollary.
\end{remark}

\begin{corollary}
Let $x$ be a real number. Fix a prime number $q\in \mathbb{N}$. Then, there exists an integer number $m\in \mathbb{Z}$, a collection of prime numbers $\{p_i\}_{i\geq 0}$, all different from $q$, and a increasing collection of integers $\{n_i\}_{i\geq m}$ such that 
\[
x=\sum_{i\geq m}^{\infty}\frac{p_i}{q^{n_i}}.
\]
\end{corollary}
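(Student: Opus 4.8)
The plan is to read this corollary as the explicit, constructive incarnation of \thref{series of localized primes} in the special case $S=\{q^k:k\ge 0\}$, where $S^{-1}\mathbb{Z}=\mathbb{Z}[1/q]$ and, as recorded in the proof of \thref{density in localization}, the primes of this localization are exactly the elements $p/q^n$ with $p$ a prime different from $q$. The only genuinely new demand over \thref{series of localized primes} is the \emph{shape} of the summands: pure powers of $q$ in the denominator with \emph{strictly increasing} exponents, and a single prime as each numerator. Since every summand $p_i/q^{n_i}$ is positive, I would treat the case $x>0$ in full; the cases $x\le 0$ are recovered from \thref{series of localized primes} (or by negating the whole expansion), so the substantive content is the increasing--exponent normal form.

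For $x>0$ the heart of the argument is a greedy expansion in which each step is realized by \emph{exactly one} prime. I would build inductively a strictly increasing sequence of integers $n_1<n_2<\cdots$, primes $p_k\ne q$, and positive remainders $R_k$ with $R_1=x$ and $R_{k+1}=R_k-p_k/q^{n_k}$. At stage $k$, having fixed $n_{k-1}$, I would choose $n_k>n_{k-1}$ large enough that $R_kq^{n_k}>2q$, and then invoke Bertrand's postulate in the form used earlier in the paper: for any real $y\ge 1$ there is a prime between $y$ and $2y$. Applying this with $y=R_kq^{n_k}/2$ produces a prime $p_k$ between $R_kq^{n_k}/2$ and $R_kq^{n_k}$, that is,
\[
\frac{R_k}{2}<\frac{p_k}{q^{n_k}}<R_k .
\]
The inequality $p_k>R_kq^{n_k}/2>q$ forces $p_k\ne q$, and by construction the new remainder satisfies $0<R_{k+1}<R_k/2$.

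From $0<R_{k+1}<R_k/2$ I would conclude $R_k<x/2^{\,k-1}\to 0$, so that the partial sums $\sum_{j=1}^{k}p_j/q^{n_j}=x-R_{k+1}$ converge to $x$; relabeling $m:=n_1$ then yields the stated series $x=\sum_{i\ge m}^{\infty}p_i/q^{n_i}$ with the exponents $n_i$ strictly increasing and every $p_i\ne q$ prime. The crucial point is that at each stage $n_k$ may be taken as large as desired, which simultaneously guarantees strict monotonicity of the exponents, the validity of Bertrand's interval (via $R_kq^{n_k}\ge 2$), and the bound $p_k\ne q$; hence no tension arises among these requirements.

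The step I expect to be the crux is exactly the one where a \emph{single} prime, rather than a sum of several primes as in \thref{density in localization}, must approximate the current remainder from below to within a factor of two. This is what forbids a naive re-indexing of the series coming from \thref{series of localized primes}: merging two summands with a common exponent, or splitting one summand across two exponents, generally destroys the primality of the numerators. Bertrand's postulate resolves this cleanly and, as a bonus, delivers geometric convergence of the remainders for free, which is why it—rather than the density statement of \thref{density in localization}—is the natural tool here.
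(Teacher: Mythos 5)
Your proof is correct, but it follows a genuinely different route from the paper's. The paper derives the corollary from \thref{series of localized primes}: it takes the series representation with summands $s'_ip_i/s_i$ (with $s'_i,s_i$ powers of $q$), simplifies each term to the form $p_i/q^{e_i}$, and then rearranges the (absolutely convergent, positive-term) series so that the exponents form an increasing sequence, citing the classical theorem on rearrangements. You instead construct the expansion directly by a greedy algorithm: at each stage you push the exponent $n_k$ high enough that Bertrand's postulate supplies a single prime $p_k$ with $R_k/2<p_k/q^{n_k}<R_k$, which forces $p_k>q$, halves the remainder, and makes the exponents strictly increasing by fiat. Your approach buys two things the paper's argument does not clearly deliver: it is self-contained (it does not lean on \thref{series of localized primes}, whose proof the paper leaves to the reader), and it sidesteps the issue you correctly identify, namely that after simplification two summands may share the same exponent of $q$, in which case no rearrangement alone can produce a strictly increasing exponent sequence without merging numerators and destroying primality. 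What the paper's route buys is brevity and uniformity with the preceding results. One caveat applies equally to both arguments: for $x\le 0$ (and especially $x=0$) negating a series of positive terms does not literally produce a sum of the stated form unless one admits negative primes (associates), a point the paper glosses over exactly as you do.
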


\begin{proof}
Without loss of generality, we can assume that $x\geq 0$ (otherwise, we consider $-x$ and multiply the final (series) representation by minus). By \thref{series of localized primes}, we obtain a series representation with summands of the form $s'_ip_i/s_i$, where $s'$ and $s_i$ are powers of $q$, and $p_i$ is a prime number different from $q$. Moreover, by the proof of \thref{series of localized primes} we can assume that each one of these summands is positive. 

Thus, by simplifying each term and by rearranging all the terms in the series, we can rewrite the series such that the negatives of the final exponents of $q$ generate an increasing sequence. This does not affect the result because any rearrangement of an absolutely convergent series is also convergent, and has the same limit (see, for instance \cite[Chapter 3,Theorem 3.55]{RudinMathematicalAnalysis}). Thus, by considering the permuted series, we obtain our desired representation. 
\end{proof}

Now, we plan to formulate a different variant of Goldbach conjecture, inspired by the results obtained along this section.

\begin{definition}\label{r form of Goldbach conjecture}
Let $r\in \mathbb{N}$. We say that a commutative ring with unity $R$ satisfies the \textit{$r$--form of Goldbach's conjecture} if any element of the ring can be written as a sum of $r$ irreducible elements of $R$.
\end{definition}

In the following result we prove that in order to verify the $r-$Form of Goldbach's conjecture in non-trivial localizations of the integers, it is enough to verify this property on arbitrarily small intervals containing the zero, or arbitrarily big interval of elements bigger than any fix parameter.

\begin{proposition}\thlabel{sufficient condition}
Let $S$ be a non-trivial multiplicative system of $\mathbb{Z}$, let $\mathbb{P}_S$ be the set of prime numbers not belonging to $S$. Fix $\varepsilon,n>0,$ where $n\in\mathbb{N}$ and $\varepsilon\in\mathbb{R}$. Assume that one of the following two conditions hold.

\begin{enumerate}[(i)]

\item The $r-$Form of Goldbach's conjecture holds for any $w\in (-\varepsilon,\varepsilon)\cap S^{-1}\mathbb{Z}$.

\item The $r-$Form of Goldbach's conjecture holds for any $w\in (n,\infty)\cap S^{-1}\mathbb{Z}$.

\end{enumerate}
Then, the $r$--form of Goldbach's conjecture holds for $S^{-1}\mathbb{Z}$.
\end{proposition}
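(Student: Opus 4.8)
The plan is to exploit the abundance of units in $S^{-1}\mathbb{Z}$ in order to reduce the representation of an arbitrary element to the representation of an element already lying in the prescribed interval, where the hypothesis supplies a decomposition into $r$ irreducibles. As recorded in the proof of \thref{density in localization}, $S^{-1}\mathbb{Z}$ is a UFD whose irreducibles are precisely the elements $s'p/s$ with $p\in\mathbb{P}_S$ and $s',s\in S$; in particular every unit multiple of an irreducible is again irreducible, and $-1$ is a unit. Following that same proof I set $n_0:=\min\{s\in S:\ s>1\}$, which is well defined by the non--triviality of $S$. Then each power $n_0^e$ lies in $S$, so that both $n_0^e$ and $n_0^{-e}$ are units of $S^{-1}\mathbb{Z}$; these are the units I will use to rescale.

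Assume first that (i) holds. Given an arbitrary $w\in S^{-1}\mathbb{Z}$, I would choose $e\in\mathbb{N}$ large enough that $|w/n_0^e|<\varepsilon$, which is possible because $n_0\geq 2$ forces $|w|/n_0^e\to 0$, while $w/n_0^e\in S^{-1}\mathbb{Z}$ since $n_0^{-e}\in S^{-1}\mathbb{Z}$. Thus $w/n_0^e\in(-\varepsilon,\varepsilon)\cap S^{-1}\mathbb{Z}$, and hypothesis (i) yields irreducibles $q_1,\ldots,q_r$ with $w/n_0^e=\sum_{j=1}^r q_j$. Multiplying by the unit $n_0^e$ gives
\[
w=\sum_{j=1}^r n_0^e q_j,
\]
and each $n_0^e q_j$ is a unit multiple of an irreducible, hence irreducible. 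This settles case (i) for every $w$, including $w=0$, which already lies in $(-\varepsilon,\varepsilon)$ so that hypothesis (i) applies to it directly.

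Now assume (ii). For $w>0$ I would instead pick $e$ with $n_0^e w>n$, which is possible since $n_0^e w\to+\infty$; then $n_0^e w\in(n,\infty)\cap S^{-1}\mathbb{Z}$, and applying (ii) followed by multiplication by the unit $n_0^{-e}$ produces the desired decomposition of $w$ exactly as above. For $w<0$ I would apply this to $-w>0$ and then multiply the resulting decomposition by the unit $-1$, using that $-q$ is irreducible whenever $q$ is. In this way all nonzero $w$ are handled by rescaling alone.

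The step I expect to be the genuine obstacle is the value $w=0$ under hypothesis (ii): multiplying by a unit cannot move $0$ off the origin into $(n,\infty)$, so here one must instead exhibit $r$ irreducibles summing to $0$. When $r$ is even this is immediate, writing $0=\sum_{k=1}^{r/2}\bigl(q+(-q)\bigr)$ for any irreducible $q$. When $r$ is odd one is reduced, after discarding $(r-3)/2$ cancelling pairs, to producing three irreducibles with vanishing sum; I would do this from the unit structure of $S^{-1}\mathbb{Z}$ (for instance $p+p+(-2p)=0$ when $2$ is invertible, or a relation such as $2+5+(-7)=0$ among primes surviving in the localization). The delicate point is to verify that whenever hypothesis (ii) holds the localization is never so rigid as to preclude such a relation: in the rigid cases, typified by $\mathbb{Z}_{(2)}$ with $r$ odd, every sum of $r$ irreducibles has $2$--adic valuation $1$, so that hypothesis (ii) already fails and the implication holds vacuously there. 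Making this dichotomy precise is the heart of the argument; once it is in hand, the remaining work is the routine rescaling described above.
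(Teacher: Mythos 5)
Your proposal follows the same route as the paper's own proof: rescale an arbitrary element by a unit of the form $s^{\pm m}$ with $s\in S$, $s>1$, so that it lands in the interval where the hypothesis applies, and then transport the resulting decomposition back using the fact that associates of irreducibles are irreducible. For hypothesis (i) your argument is identical to the paper's and complete. For hypothesis (ii) you are in fact more careful than the paper: the printed proof simply says ``choose $m$ such that $s^m v\in(n,\infty)$,'' which is impossible when $v\leq 0$, so your repair of the case $v<0$ (pass to $-v$, decompose, and multiply through by the unit $-1$) fills a real gap in the published argument. The one point you leave open, namely writing $0$ as a sum of $r$ irreducibles under hypothesis (ii), is also passed over in silence by the paper, and your diagnosis of it is accurate: no rescaling moves $0$ into $(n,\infty)$; for even $r$ the decomposition $0=\sum \bigl(q+(-q)\bigr)$ is immediate; and for odd $r$ one needs a genuine three-term relation among irreducibles, which can fail (your $\mathbb{Z}_{(2)}$ example, where every sum of an odd number of irreducibles has $2$-adic valuation exactly $1$) precisely in situations where hypothesis (ii) itself fails. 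So the statement is presumably still correct, but neither your argument nor the paper's actually establishes the $w=0$ subcase of (ii); making your dichotomy precise (or else adding the decomposability of $0$ as an explicit hypothesis in case (ii)) is what is missing from both.
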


\begin{proof}
Let $v\in S^{-1}\mathbb{Z}$ and $s\in S,$ with $s>1$. Choose $m\in\mathbb{N}$ such that $v/s^m\in(-\varepsilon,\varepsilon)$ (resp. such that $s^mv\in (n,\infty)$). Then, by hypothesis there exist $s'_1,s_1,\ldots,s'_r,s_r\in S$ and $p_1,\ldots,p_r\in \mathbb{P}_S$ such that $v/s^m=\sum_{i=1}^{r}s'_ip_i/s_i$ (resp. $s^mv=\sum_{i=1}^{r}s'_ip_i/s_i$). Then, $v=\sum_{i=1}^{r}s^ms'_ip_i/s_i$ (resp. $v=\sum_{i=1}^{r}s'_ip_i/(s^ms_i)$), giving us a representation of $v$ as the sum of $r$ irreducibles. This finishes our proof.
\end{proof}

\section{Goldbach's Conjecture over Special Classes of Localized Rings}\label{section of more localization}

Our next goal will be to extend \thref{teo1} to suitable localization of coordinate rings of affine varieties over unique factorization domains. With that purpose in mind, we present the following construction.


\begin{construction}\thlabel{making irreducibility in a localization}
Let $K$ be any field, let $R=K[x_1,\ldots, x_n]$ with $n\geq 2.$ For any $\mathbf{i}\in\mathbb{N}^n$ with $\gcd (\mathbf{i})>1$, set
\begin{align*}
& S_{\mathbf{i}}:=\{q(x_1,\ldots,x_n)=\mathbf{x}^{\mathbf{i}}+\mathbf{x}^{\mathbf{w}}+1:\ \mathbf{w}\in\mathbb{N}^n,\ q\text{ is absolutely irreducible}\},\\
& T_{\mathbf{i}}:=\{q(x_1,\ldots,x_n)=\mathbf{x}^{\mathbf{w}}+1:\ \mathbf{w}\in\mathbb{N}^n,\ q\text{ is absolutely irreducible}\},\\
&  U_{\mathbf{i}}:=\{x_1+x_2+c:\ c\in K\},\ W_{\mathbf{i}}:=S_{\mathbf{i}}\cup T_{\mathbf{i}}\cup U_{\mathbf{i}}.
\end{align*}
Moreover, we also set
\[
W:=\bigcup_{\substack{\mathbf{i}\in\mathbb{N}^n\\ \gcd (\mathbf{i})>1}} W_{\mathbf{i}}.
\]
Note that the set $W$ fulfills the condition that it contains all the (absolutely) irreducible polynomials that acts as the additive factors in the proof(s) of \thref{teo1} and \thref{teo2}. In other words, most of the polynomials of $W$ are the ones allowing us to guarantee the weak form of Goldbach condition for arbitrary polynomials in the corresponding polynomial rings. Nonetheless, \nameref{Gao irreducibility criterion} and related results in \cite{gao2001absolute} and \cite{Koyuncuirreducibilityoverrings} give us freedom to choose systematically and explicitly different families of (absolutely) irreducible polynomial as additional classes of additive factors in alternative proofs of these theorems. These facts motivate the following definition.
\end{construction}
\begin{definition}
Let $D$ be an integral domain,  $R=D[x_1,\ldots,x_n]$, and $W$ a collection of (absolutely) irreducible polynomials in $R$. We say that $R$ is a \textbf{system of explicit irreducible polynomials}, if $W$ can be chosen explicitly as an alternative collection of (absolutely) irreducible polynomials in the proof(s) of \thref{teo1} and \thref{teo2}.
\end{definition}
It is straightforward to see that \thref{making irreducibility in a localization} gives an explicit example of a system of explicit irreducible polynomials.
Our next step will be to extend \thref{teo1} and \thref{teo2} in a explicit manner for special kinds of localizations of rings of polynomials over a suitable ring.
The utility of the former definition lies in the fact that it gives more flexibility and a wider range of possibilities to the collection of the multiplicative systems that we can use in such a generalization given by the following theorem.

\begin{theorem}\thlabel{localization}
Let $R$ be a unique factorization domain (UFD), and let $\mathcal{O}(\mathbb{A}_R^n)=R[x_1,\ldots,x_n],$ with $n\geq 2$. Let $W$ be a collection of explicit irreducible polynomial in $\mathcal{O}(\mathbb{A}_R^n)$. Let $S\subsetneq R$ be a multiplicative system generated (multiplicatively) by a set of irreducible polynomials $S_0$ such that $S_0\cap W=\emptyset$. Let $T=S^{-1}R$ be the localization of $R$ with respect to $S$. Then, any element $L$ in $T$ with $r$ non-zero terms can be explicitly written as the sum of at most $2r$ irreducibles.
\end{theorem}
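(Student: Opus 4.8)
The plan is to reduce the statement over the localization $T=S^{-1}\mathcal{O}(\mathbb{A}_R^n)$ to the polynomial case already settled in \thref{teo2}, and then to control how irreducibility behaves under the localization map. First I would fix a representation $L=f/s$ with $f\in\mathcal{O}(\mathbb{A}_R^n)=R[x_1,\ldots,x_n]$ and $s\in S$, taking $f$ to be a numerator realizing the $r$ nonzero terms of $L$. Since $R$ is a UFD it is in particular an integral domain, so every nonzero coefficient of $f$ is a non-zero-divisor of $R$; hence \thref{teo2} applies to $f$ and produces an explicit decomposition
\[
f=\sum_{k=1}^{2r} g_k,
\]
where each $g_k$ is absolutely irreducible in $R[x_1,\ldots,x_n]$ and, by construction, belongs to the system $W$ of \thref{making irreducibility in a localization}. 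Dividing by $s$ gives $L=\sum_{k=1}^{2r} g_k/s$, so it only remains to prove that each summand $g_k/s$ is irreducible in $T$.

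The key step is a structural description of the irreducibles of $T$. By iterated Gauss's lemma $R[x_1,\ldots,x_n]$ is a UFD, and the localization of a UFD at a multiplicative set is again a UFD, so $T$ is a UFD. I would then establish the standard fact (the exact analogue of the description of the primes of $S^{-1}\mathbb{Z}$ recalled in \thref{density in localization}): if $p$ is an irreducible of $R[x_1,\ldots,x_n]$, then its image in $T$ is a unit precisely when $p$ is an associate of some generator in $S_0$, and it stays irreducible otherwise. Indeed, if $p$ divides no element of $S$, then $(p)\cap S=\emptyset$, so the prime ideal $(p)$ survives in the localization and $p/1$ remains prime, hence irreducible in $T$; whereas if $p$ is an associate of a generator in $S_0$, then $p$ divides an element of $S$ and therefore becomes invertible in $T$.

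Finally I would apply this to the summands. Since $s\in S$ is a unit of $T$, each $g_k/s$ is an associate in $T$ of $g_k/1$, so it suffices to check that $g_k/1$ is irreducible. As $g_k\in W$ and $S_0\cap W=\emptyset$, the irreducible $g_k$ is (up to associates) not a generator in $S_0$, so by the previous paragraph $g_k/1$, and hence $g_k/s$, is irreducible in $T$. This exhibits $L$ as a sum of at most $2r$ irreducibles of $T$, as desired. I expect the main obstacle to be precisely the leverage exerted by the hypothesis $S_0\cap W=\emptyset$: it is exactly what prevents the explicit irreducibles coming from \thref{teo2} from collapsing to units after localizing, and it clarifies why the proof of \thref{teo1} was careful to decompose the constant terms using the non-monomial irreducibles $x_1+x_2+c$ and $-(x_1+x_2)$ of $U_{\mathbf{i}}\subseteq W$ rather than bare monomials such as $x_1$, which could well belong to $S_0$ and thus become invertible in $T$.
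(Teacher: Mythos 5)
Your proposal is correct and follows essentially the same route as the paper: apply \thref{teo2} to a numerator of $L$, and then check that the resulting absolutely irreducible summands, which lie in $W$ and hence outside $S$, remain irreducible after localizing, so that dividing by the unit $s$ gives the desired decomposition in $T$. The paper actually records three alternative justifications of that localization step, and yours coincides with its second one (the prime ideal generated by each summand is disjoint from $S$, so it survives in the localization of the UFD $T$ and the summand stays prime, hence irreducible).
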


\begin{proof}
First, let us give three simple proofs of a central elementary fact in this context: if $H$ is an irreducible polynomial of $\mathcal{O}(\mathbb{A}_R^n)$ such that $H\notin S$, then $H/1$ is an irreducible element of $T$. 

Sub-proof 1: Effectively, let us assume by the sake of contradiction that $H/1$ is a reducible element of $T$. So, there exists irreducible elements $I_1,\cdots, I_m$ of $\mathcal{O}(\mathbb{A}_R^n)$, not belonging to $S$, with $m\geq 2$, and a $Z\in S$ such that 
\[
ZH=I_1\cdots I_m.
\]
Let us write $Z$ as a product of irreducible elements of $S$, $Z_1,\cdots,Z_q\in S$. Therefore, the last equation becomes
\[
Z_1\cdots Z_qH=I_1\cdots I_m.
\]
Due to the elementary fact that $R[x_1,\cdots, x_n]$ is a unique factorization domain as well (since $R$ so is), and $H$ is irreducible; up to associates (or units), we can cancel $H$ with one $I_j$, for some $j\in [m].$ Let us assume without lost of generality that $j=1$. So, we derive the equation
\[
Z_1\cdots Z_q=I_2\cdots I_m,
\]
which contradicts the unique factorization in $\mathcal{O}(\mathbb{A}_R^n)=R[x_1,\ldots,x_n]$, due to the fact that $I_2$ is an irreducible polynomial do not belonging to $S$, but all the $Z_d$ belong to $S$. An immediate consequence of the former proof is that if $A\in S,$ then the element $H/S$ is irreducible in $T.$

Sub-proof 2: First of all, since $R$ is a UFD, we have that $R[x_1,\ldots,x_n]$ is a UFD. Since the localization of a UFD is also a UFD, we have that $T=S^{-1}R$ is a UFD.

Now, let $H\in R[x_1,\ldots,x_n]$ be an irreducible polynomial such that $H\notin S$. We claim that $H/1$ is irreducible in $T$. Indeed, since $H$ is irreducible and $R[x_1,\ldots, x_n]$ is a UFD, we have that $(H)$ is a prime ideal of $R[x_1,\ldots,x_n].$ Moreover, since $H\notin S$ we also have that $(H/1)$ is a prime ideal of $T.$ In this way, since $(H/1)$ is a prime ideal of $T,$ and $T$ is an integral domain, we have (see for instance \cite[Proposition 4.1.5]{TauvelYu}) that $H/1$ is irreducible in $T.$ In particular, we have that if $A\in S$, then the element $H/A$ is irreducible in $T.$

Sub-proof 3: Since $R$ is a UFD, we have that $R[x_1,\ldots,x_n]$ is a UFD. Since the localization of a UFD is also a UFD, we have that $T=S^{-1}R$ is a UFD.

Now, let $H\in R[x_1,\ldots,x_n]$ be an irreducible polynomial such that $H\notin S$. We claim that $H/1$ is irreducible in $T$. Indeed, $S$ is a multiplicative set generated by irreducible polynomials. Since $R[x_1,\ldots,x_n]$ is a UFD, any irreducible is prime and therefore we have that $S$ is generated by prime elements. Therefore, using \cite[Proposition 1.9]{factorizationintegraldomains} we have that the extension $R[x_1,\ldots, x_n]\subset T$ is inert in the sense of Cohn (see for instance \cite[page 80]{factorizationintegraldomains}). Moreover, we also have that the units of $T$ that belongs to $R[x_1,\ldots,x_n]$ are exactly the units of $R[x_1,\ldots, x_n].$ Therefore, using \cite[Lemma 1.1]{factorizationintegraldomains} we have that $H$ is irreducible in $R[x_1,\ldots,x_n]$ if and only if $H/1$ is irreducible in $T$, proving our claim. In particular, we have that if $A\in S$, then the element $H/A$ is irreducible in $T.$

Finally, let us consider a rational polynomial in $T$, let us say $H/W$, where $W\in \mathcal{O}(\mathbb{A}_R^n)$ and $W\in S.$ By \thref{teo2}, $H$ can be written as the sum of at most $2r$ absolutely irreducible polynomials, i.e.,
\[
H=\sum_{b=1}^pI_b,
\]
for absolutely irreducible polynomials $I_1,\cdots, I_p;$ with $p\leq 2r.$ Moreover, by construction of $S$, $I_b\notin S,$ for all $b\in [p].$ Thus, by the former central elementary fact, 
\[
H/W=\sum_{b=1}^p(I_b/W),
\]
where the elements $I_b/W$ are irreducible in $T$ for all $b\in [p]$. This finishes our constructive proof. 
\end{proof}

\begin{remark}
Notice that, since $R$ is an integral domain, in the proof of \thref{localization} we can choose the explicit irreducible elements described in the proof of \thref{teo1} if we want.
\end{remark}

A particular special case of \thref{localization} is given by the next statement. 

\begin{corollary}
Let $R$ be a unique factorization domain, and let $\mathcal{O}(\mathbb{A}_R^n)=R[x_1,\ldots,x_n],$ with $n\geq 2$. Let $S_m\subsetneq R$ be a multiplicative system consisting of the monomials of $R[x_1,\ldots,x_n]$ . Let $T=S_m^{-1}R$ the localization of $R$ in $S_m$. Then, any element $L$ in $T$ with $r$ non-zero terms can be explicitly written as the sum of at most $2r$ irreducibles. The zero element can be written as the sum of two  irreducibles.
\end{corollary}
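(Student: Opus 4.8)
The plan is to obtain this corollary as a direct specialization of \thref{localization}, so the main task is simply to verify that the multiplicative system $S_m$ consisting of the monomials of $R[x_1,\ldots,x_n]$ satisfies the hypotheses of that theorem. First I would observe that since $R$ is a UFD, the polynomial ring $\mathcal{O}(\mathbb{A}_R^n)=R[x_1,\ldots,x_n]$ is a UFD as well, and each variable $x_i$ is an irreducible (indeed prime) element of it. Every monomial $\mathbf{x}^{\mathbf{a}}$ is thus a product of the irreducibles $x_1,\ldots,x_n$, so $S_m$ is precisely the multiplicative system generated by the finite set of irreducible polynomials $S_0:=\{x_1,\ldots,x_n\}$.

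Next I would check the crucial compatibility condition $S_0\cap W=\emptyset$, where $W$ is a system of explicit irreducible polynomials as in \thref{making irreducibility in a localization}. This is immediate from the construction of $W$: every element of $W$ lies in one of the families $S_{\mathbf{i}}$, $T_{\mathbf{i}}$ or $U_{\mathbf{i}}$, and each such polynomial has a non--zero constant term (the $+1$ in $S_{\mathbf{i}}$ and $T_{\mathbf{i}}$, or the summand $c$ together with the genuinely two--variable shape $x_1+x_2+c$ in $U_{\mathbf{i}}$), whereas the variables $x_1,\ldots,x_n$ are monomials with no constant term. Hence no $x_i$ can coincide with any element of $W$, so $S_0\cap W=\emptyset$. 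It is precisely to secure this disjointness that \thref{teo1} offers the non--monomial alternatives (for instance $x_1+x_2+c$ in place of $c$), as anticipated in the footnote of its proof; this is the step one must be slightly careful about, since a careless choice of explicit irreducibles could accidentally include one of the localized variables.

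With both hypotheses verified, I would apply \thref{localization} directly to $S=S_m$ and $T=S_m^{-1}R$, concluding that any element $L$ of $T$ with $r$ non--zero terms can be explicitly written as the sum of at most $2r$ irreducible elements of $T$. Finally, for the zero element I would invoke the last clause of \thref{teo1} (via \thref{teo2}), writing $0=(x_1+x_2)+(-x_1-x_2)$ in $\mathcal{O}(\mathbb{A}_R^n)$ as a sum of two absolutely irreducible polynomials, neither of which lies in $S_m$; pushing this decomposition into the localization through the central elementary fact established in the proof of \thref{localization} (that an irreducible polynomial outside $S$ remains irreducible in $T$) then exhibits $0$ as a sum of two irreducibles in $T$, completing the proof. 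I expect no genuine obstacle here: the entire content is the bookkeeping that $S_m$ is generated by irreducibles disjoint from $W$, after which the result is purely a corollary of \thref{localization}.
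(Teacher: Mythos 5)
Your proposal is correct and follows exactly the paper's own route: the paper's proof is the one--line observation that one may take $S=S_m$ in \thref{localization} because all the irreducible polynomials produced in \thref{making irreducibility in a localization} are non--monomials. You simply spell out the details the paper leaves implicit (that $S_m$ is generated by the irreducibles $x_1,\ldots,x_n$, that disjointness from $W$ follows since every element of $W$ is a non--monomial, and that the zero element is handled by the non--monomial decomposition $0=(x_1+x_2)+(-x_1-x_2)$), correctly flagging the footnoted subtlety about choosing non--monomial alternatives in case (ii) of \thref{teo1}.
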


\begin{proof}
In \thref{localization} we can set $S=S_m$ due to the fact that all the irreducible polynomials in the \thref{making irreducibility in a localization} are non monomials.
\end{proof}

\begin{remark}\label{non straightforward extension of teo1}
One natural way to proceed to generalize \thref{teo1} for coordinate rings of varieties would be to consider highly simple coordinate rings of varieties given by irreducible polynomials produced by \nameref{Gao irreducibility criterion}, and, subsequently try to use the same method of the proof of \thref{teo1} for constructing explicitly the (absolutely) irreducibles polynomials for each monomial of the polynomial in consideration. 

However, this methodological line of generalization seems not to work so straightforwardly due to the fact that in a quotient ring the property of being (absolutely) irreducible could be materialized qualitatively in a strong different manner as in the ring of polynomials. Let us show exactly what we mean with the following example.

Consider the (absolutely) irreducible polynomial $g=w^px^{p+1}+y^{2pi}\in R:=K[w,x,y]$, for some positive integers $p,i\in\mathbb{N}$, due to \nameref{Gao irreducibility criterion}. Set $V=V(g)$. Then, in $\mathcal{O}(V)=R/(g)$ a typical polynomial that we considered in the constructive proof of \thref{teo1} has the form of 
\[
A=w^px^{p+1}y^{2pi}+1.
\]
Now, in $R$, $A$ is an (absolutely) irreducible polynomial by \nameref{Gao irreducibility criterion}. Nonetheless, in $\mathcal{O}(V)$, $A$ turns out to be a reducible polynomial. Indeed, regard both polynomials $g$ and $A$ as polynomials in the variable $y$; that is, $g,\ A\in K[x,w][y]$. If we perform here the euclidean division of $A$ by $g$ we obtain
\[
A=(w^px^{p+1})g+(1-w^{2p}x^{2(p+1)}),
\]
and therefore, keeping in mind this equality we clearly have that
\[
A\equiv 1-w^{2p}x^{2(p+1)}\equiv (1-w^px^{p+1})(1+w^p x^{p+1})\pmod g.
\]
In this way, if $W,X,Y$ denote the classes in $\mathcal{O}(V)$ of the corresponding variables in $R$, then the following equation holds in $\mathcal{O}(V)$.
\[
W^pX^{p+1}Y^{2pi}+1=(W^pX^{p+1}+1)(Y^{2pi}+1).
\]
This equation, taking into account that $y^{2pi}\equiv -w^p x^{p+1}\pmod g,$ can also be written in the following way.
\[
W^pX^{p+1}Y^{2pi}+1=(1-Y^{2pi})(1+Y^{2pi})=(1-Y^{pi})(1+Y^{pi})(1+Y^{2pi}).
\]
Finally, using that $1-Y^{pi}=(1-Y)(Y^{pi-1}+Y^{pi-2}+\ldots+Y+1),$ we end up with the following factorization of our polynomial $A$ in $R/(g).$
\[
W^pX^{p+1}Y^{2pi}+1=(1-Y)(Y^{pi-1}+Y^{pi-2}+\ldots+Y+1)(1+Y^{pi})(1+Y^{2pi}).
\]
In conclusion, in $\mathcal{O}(V)$ some of the fundamental (absolutely) irreducible polynomials used for our additive decomposition of polynomials are not (absolutely) irreducible anymore. 
\end{remark}

\section{Characterizing Goldbach's Conjecture over suitable Forms of Forcing Algebras}\label{section on forcing algebras}

Let us continue with an elementary result involving a characterization of GC in the special case of forcing algebras over an algebraically closed field $K$. Forcing algebras emerge naturally when we want to translate within an algebraic canonical structure \emph{how close} is an element $f$ of a commutative ring with unity $R$ of belonging to a finitely generated ideal $I=(f_1,\ldots,f_m)\subseteq R$ \cite[Section 4]{solidclosure}.

\begin{proposition}\thlabel{Goldbach conjecture and forcing algebras}
Let $K$ be any field, let $R=K[x_1,\ldots,x_n]$ the ring of polynomials over $K$ in finitely many variables; $f,f_1,\ldots,f_n\in K$, with  $f_i\neq 0$, for some index $i$; and 
\[
A=K[x_1, \ldots, x_n] /(f_1x_1 + \ldots + f_nx_n+f ) 
\]
the corresponding forcing algebra. Then, the following statements hold.

\begin{enumerate}[(i)]

\item Assume that $n\geq 3.$ Then, any element of $A$ can be written as the sum of two irreducible elements of $A.$

\item If, in addition, $K$ is algebraically closed, then any element of $A$ can be written as the sum of two irreducible elements if and only if $n\geq 3.$ 

\end{enumerate}
\end{proposition}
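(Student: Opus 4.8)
The plan is to reduce the entire statement to the internal structure of the quotient ring $A$. The crucial observation is that, since $f_1,\ldots,f_n\in K$ are scalars and $f_i\neq 0$ for some index $i$, the generator $\ell:=f_1x_1+\cdots+f_nx_n+f$ is a degree-one polynomial whose coefficient on $x_i$ is a unit of $K$. Solving the relation $\ell=0$ for $x_i$ expresses the class of $x_i$ in $A$ as a $K$-linear combination of the remaining variables plus a constant. First I would make this precise by exhibiting the $K$-algebra isomorphism
\[
A=K[x_1,\ldots,x_n]/(\ell)\;\cong\;K[y_1,\ldots,y_{n-1}],
\]
where the $y_j$ are the images of the variables $x_k$ with $k\neq i$; in other words, $A$ is a polynomial ring in exactly $n-1$ variables over $K$.

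For part (i), once this isomorphism is in hand the argument is immediate. If $n\geq 3$ then $n-1\geq 2$, so $A\cong K[y_1,\ldots,y_{n-1}]$ is a polynomial ring in at least two variables over the integral domain $K$. I would then invoke \thref{our result using Pollack theorem} with $D=K$ to conclude that every element of $A$ can be written as a sum of two irreducibles, which is exactly the assertion of (i).

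For part (ii), the forward implication is nothing but (i), which is valid over an arbitrary field and in particular over the algebraically closed $K$. For the reverse implication I would argue by contraposition, assuming $n\leq 2$. If $n=1$ then $\ell=f_1x_1+f$ with $f_1\neq 0$, so $A\cong K$ is a field and contains no irreducible elements; hence no element of $A$ is a sum of two irreducibles. If $n=2$ then the isomorphism gives $A\cong K[y]$, and here the hypothesis that $K$ is algebraically closed becomes decisive: the irreducible elements of $K[y]$ are precisely the polynomials of degree one, so a sum of two irreducibles always has degree at most one. Consequently any element of degree at least two, for instance the class of $y^2$, cannot be written as a sum of two irreducibles, and the property fails. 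This settles the contrapositive and hence the equivalence.

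I expect the only genuinely delicate point to be the verification of the isomorphism $A\cong K[y_1,\ldots,y_{n-1}]$ and the bookkeeping that a single linear relation with an invertible coefficient eliminates exactly one variable; everything else reduces either to \thref{our result using Pollack theorem} or to the classical description of the irreducibles of a univariate polynomial ring over an algebraically closed field. It is worth emphasizing that the algebraically closed hypothesis is used \emph{only} in the case $n=2$ of (ii), precisely to pin down the irreducibles of $K[y]$ as the degree-one polynomials and thereby obstruct all elements of higher degree.
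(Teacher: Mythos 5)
Your proposal is correct and follows essentially the same route as the paper: the linear generator with an invertible coefficient on $x_i$ yields the isomorphism $A\cong K[y_1,\ldots,y_{n-1}]$, part (i) reduces to Pollack's theorem for polynomial rings in at least two variables over an integral domain, and the cases $n=1$ (a field, no irreducibles) and $n=2$ (irreducibles of $K[y]$ over an algebraically closed field are exactly the degree-one polynomials) give the converse in (ii). The only cosmetic difference is that you cite \thref{our result using Pollack theorem} wholesale, whereas the paper invokes Pollack's result for non-constant polynomials and treats constants separately via $h=x_j+(-x_j+h)$; both are fine.
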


\begin{proof}
First of all, we prove part (i). Indeed, assume that $n\geq 3.$ Then, we have a $K$--algebra isomorphism
\[
A\cong K[x_1,\ldots,x_{i-1},x_{i+1},x_{i+2},\ldots,x_n]
\]
given by sending $x_i\mapsto -\sum_{j\neq i}(f_j/f_i)x_j-(f/f_i)$ and $x_r\mapsto x_r$ for $r\neq i$. So, $A$ is isomorphic to the ring of polynomials in $(n-1)\geq 2$ variables. Thus, by \cite[Theorem 2]{pollack2011polynomial} any non-constant polynomial of $A$ of degree $m\geq 1$ can be written as the sum of two irreducible polynomials of degree $m$. Now, if $h$ is a constant polynomial in $A$, then $h$ can be written as the sum of two irreducible polynomials $h=x_j+(-x_j+h)$, where $j\neq i$, and $x_j$ and $-x_j+h$ are obviously irreducible polynomials in $A.$ This completes the proof of part (i)

In order to prove (ii), hereafter we assume that $K$ is algebraically closed. If $n\geq 3$ then we are done by part (i), so we suppose that $n \leq 2.$ In the case that $n=1,$ by the same argument as before, $A$ is isomorphic to $K.$ Therefore, $A$ do not fulfill our thesis, since all the elements of $A$, but zero, are units, which, by definition are not irreducible. When $n=2$, $A\cong K[T]$. Since $K$ is algebraically closed, the only irreducible polynomials of $A$ have degree one. Then, no polynomial in $A$ of degree $\geq 2$ can be written as the sum of two irreducible polynomials. 
\end{proof}

\section{The Strong Goldbach Condition on Special Classes of Coordinate Rings of Affine Varieties over Several Types of Fields}\label{section on strong Golbach condition}

Our next proposition involves a certain class of affine varieties whose rings of regular functions (or coordinate rings) fulfills the stronger version of Goldbach's conjecture in \thref{Goldbach conjecture and forcing algebras}, i.e. any regular function can be written as the sum of two irreducible regular functions. Let us formulate this property in a precise manner before stating our result.

\begin{definition}\label{strong Goldbach conjecture: definition}
Let $R$ be a commutative ring. We say that $R$ satisfies the \textbf{Strong Goldbach Condition (SGC)} if any element of $R$ can be written as the sum of two irreducible elements of $R.$
\end{definition} 

\begin{proposition}\thlabel{prop2}
Let $K$ be any field, let $f_1,\ldots,f_r\in K[x_1,\ldots,x_n]$ be such that $X_n=V(f_1,\ldots,f_r)\subsetneq \mathbb{A}_K^n$ is an irreducible affine variety with infinitely many points. Let $X_{n+1}=V(f_1,\ldots,f_r)\subseteq \mathbb{A}_K^{n+1}$ be the corresponding irreducible variety embedded in $\mathbb{A}_K^{n+1}$. Then, the ring of regular functions of $X_{n+1}$, $\mathcal{O}(X_{n+1})$ fulfills the SGC. 
\end{proposition}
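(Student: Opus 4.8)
The plan is to recognize $\mathcal{O}(X_{n+1})$ as a one--variable polynomial ring over $\mathcal{O}(X_n)$ and then appeal to Pollack's one--variable polynomial Goldbach theorem \cite[Theorem 1]{pollack2011polynomial}. The key structural observation is that none of $f_1,\ldots,f_r$ involves the extra coordinate $x_{n+1}$, so that $X_{n+1}$ is the cylinder $X_n\times\mathbb{A}_K^1$. Writing $\mathcal{O}(X_n)=K[x_1,\ldots,x_n]/\mathfrak{p}$ with $\mathfrak{p}=I(X_n)$ prime (this is where irreducibility of $X_n$ enters), the natural surjective $K$--algebra map $K[x_1,\ldots,x_{n+1}]\to \mathcal{O}(X_n)[x_{n+1}]$, which sends $x_i\mapsto\overline{x_i}$ for $i\leq n$ and fixes $x_{n+1}$, has kernel exactly the extended ideal $\mathfrak{p}\cdot K[x_1,\ldots,x_{n+1}]$, because $\sum_j g_j(x_1,\ldots,x_n)x_{n+1}^j$ maps to $0$ precisely when every coefficient $g_j$ belongs to $\mathfrak{p}$. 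First I would record the resulting isomorphism
\[
\mathcal{O}(X_{n+1})\cong \mathcal{O}(X_n)[x_{n+1}],
\]
which displays $\mathcal{O}(X_{n+1})$ as $D[x_{n+1}]$ for the integral domain $D:=\mathcal{O}(X_n)$.

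Next I would check that $D$ satisfies the hypotheses of \cite[Theorem 1]{pollack2011polynomial}: that it is a Noetherian integral domain with infinitely many maximal ideals. It is an integral domain since $X_n$ is irreducible, and Noetherian since it is a finitely generated $K$--algebra, by the Hilbert basis theorem. For the infinitude of maximal ideals I would use the hypothesis that $X_n$ has infinitely many points: each point of $X_n$ corresponds to a distinct maximal ideal of $D$, so $D$ has infinitely many of them. Granting these hypotheses, \cite[Theorem 1]{pollack2011polynomial} applies to $D[x_{n+1}]\cong\mathcal{O}(X_{n+1})$ and guarantees that every element is a sum of two irreducibles, which is exactly the Strong Goldbach Condition; this is what we want to prove.

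I do not expect a deep obstacle here, since the argument is essentially a reduction; the steps needing the most care are the verification of Pollack's hypotheses rather than any computation. The main one is translating ``infinitely many points'' into ``infinitely many maximal ideals'' uniformly in the field $K$, which is clean once one identifies the (closed) points of the affine variety $X_n$ with the maximal ideals of its coordinate ring. A secondary, routine point is that Pollack's theorem is usually stated for non--constant elements, so a constant $c\in D$ should be dealt with separately through the decomposition $c=(x_{n+1}+c)+(-x_{n+1})$, whose summands are irreducible in $D[x_{n+1}]$ because $D$ is a domain, mirroring the constant--term case already handled in the proof of \thref{teo1}. Everything beyond these points is bookkeeping.
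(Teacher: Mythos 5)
Your proposal is correct and follows essentially the same route as the paper: identify $\mathcal{O}(X_{n+1})\cong\mathcal{O}(X_n)[x_{n+1}]$, check that $\mathcal{O}(X_n)$ is a Noetherian integral domain whose infinitely many points give infinitely many maximal ideals, and invoke \cite[Theorem 1]{pollack2011polynomial}. Your explicit treatment of the constant case via $c=(x_{n+1}+c)+(-x_{n+1})$ is a slightly more concrete version of the paper's remark that the same argument covers constants.
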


\begin{proof}
It is straightforward to see that  
\begin{align*}
\mathcal{O}(X_{n+1})& \cong K[x_1,\ldots,x_{n+1}]/I(X_{n+1})\cong K[x_1,\ldots,x_{n+1}]/(f_1,\ldots,f_r)\\
& \cong K[x_1,\ldots,x_n]/(f_1,\ldots,f_r))[x_{n+1}]=\mathcal{O}(X_n)[x_{n+1}].
\end{align*}
Moreover, by hypothesis and by Hilbert Basis Theorem, the ring $\mathcal{O}(X_n)$ is a Noetherian integral domain with infinitely many maximal ideals (corresponding to the infinitely many points of $X_n$). Thus, the ring $\mathcal{O}(X_n)$ fulfills the hypothesis of \cite[Theorem 1]{pollack2011polynomial}. Therefore, any polynomial of degree $m\geq 1$ in $\mathcal{O}(X_n)[x_{n+1}]$ can be expressed as the sum of two irreducibles of degree $m$. With the same argument, we also have that any constant polynomial can be written as the sum of two irreducible polynomials. In conclusion, the ring $\mathcal{O}(X_{n+1})$ satisfies the SGC.
\end{proof}

Our next corollary involves an equivalent condition in terms of the dimension of the corresponding affine variety.

\begin{corollary}\thlabel{dimension zero}
Let $K$ be a field, $X_n=V(f_1,\ldots,f_r)\subsetneq \mathbb{A}_K^n$ an irreducible affine variety for some polynomials $f_1,\ldots,f_r\in K[x_1,\ldots,x_n].$ Let $X_{n+1}=V(f_1,\ldots,f_r)\subseteq\mathbb{A}_K^{n+1}$ be the corresponding irreducible variety embedded in $\mathbb{A}_K^{n+1}.$ Assume that $\dim (X_n)>0$. Then, $\mathcal{O}(X_{n+1})$ fulfills the SGC. 
\end{corollary}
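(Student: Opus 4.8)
The plan is to deduce this corollary directly from \thref{prop2} by checking that its only substantive hypothesis, namely that $X_n$ has infinitely many points, is automatically satisfied once $\dim(X_n)>0$. Concretely, since $X_n$ is irreducible, its coordinate ring $\mathcal{O}(X_n)=K[x_1,\ldots,x_n]/(f_1,\ldots,f_r)$ is a finitely generated $K$--algebra which is an integral domain, and its Krull dimension equals $d:=\dim(X_n)\geq 1$. The points of $X_n$ correspond (exactly as in the proof of \thref{prop2}) to the maximal ideals of $\mathcal{O}(X_n)$, so it suffices to prove that a finitely generated domain over $K$ of positive Krull dimension has infinitely many maximal ideals.

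First I would apply Noether normalization to obtain an injection $K[t_1,\ldots,t_d]\hookrightarrow\mathcal{O}(X_n)$ making $\mathcal{O}(X_n)$ module--finite, hence integral, over the polynomial subring $K[t_1,\ldots,t_d]$ with $d\geq 1$. Next I would show that $K[t_1,\ldots,t_d]$ already has infinitely many maximal ideals: for each monic irreducible $p(t_1)\in K[t_1]$ the ideal $(p(t_1),t_2,\ldots,t_d)$ is maximal, with residue field the finite extension $K[t_1]/(p(t_1))$, and distinct irreducibles yield distinct maximal ideals; since $K[t_1]$ possesses infinitely many monic irreducibles --- the linear ones $t_1-a$ when $K$ is infinite, and otherwise by Euclid's classical argument --- this produces infinitely many.

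Finally I would transfer this conclusion to $\mathcal{O}(X_n)$ through the integral extension. By the Lying Over theorem, every maximal ideal $M$ of $K[t_1,\ldots,t_d]$ is the contraction of some prime of $\mathcal{O}(X_n)$, and since the extension is integral that prime may be taken maximal (for integral extensions of domains a prime of the top ring is maximal if and only if its contraction is). Distinct base maximal ideals have distinct contractions, so the infinitely many maximal ideals of $K[t_1,\ldots,t_d]$ lift to infinitely many maximal ideals of $\mathcal{O}(X_n)$; equivalently, $X_n$ has infinitely many points. With this established, $X_n$ satisfies all the hypotheses of \thref{prop2} --- it is irreducible, properly contained in $\mathbb{A}_K^n$, and now known to have infinitely many points --- so $\mathcal{O}(X_{n+1})$ fulfills the SGC.

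I do not expect a genuine obstacle here: the entire content of the argument is the point count, and the only place requiring mild care is the finite--field case, where the naive supply of points $t_1-a$ is exhausted and one must instead invoke the existence of infinitely many irreducible polynomials in $K[t_1]$. The remaining ingredients --- Noether normalization and Lying Over --- are completely standard, and everything after the point count is a verbatim appeal to \thref{prop2}.
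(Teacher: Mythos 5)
Your argument is correct, and it reaches the same reduction as the paper --- namely, that everything hinges on showing $\dim(X_n)>0$ forces $X_n$ to have infinitely many points, after which one quotes \thref{prop2} verbatim --- but you get there by a genuinely different route. The paper's proof is a one-line citation of a result of Cox--Little--O'Shea asserting that an affine variety has dimension zero if and only if it is finite. You instead prove the point count from scratch: Noether normalization realizes $\mathcal{O}(X_n)$ as a module-finite extension of $K[t_1,\ldots,t_d]$ with $d\geq 1$, the base ring visibly has infinitely many maximal ideals (linear ones over an infinite field, Euclid's argument over a finite one), and Lying Over for integral extensions lifts these to infinitely many maximal ideals of $\mathcal{O}(X_n)$. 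This buys two things. First, it is self-contained and works uniformly over any field, including finite fields, where the naive supply of rational points runs out. Second, it directly establishes the hypothesis that is actually consumed inside the proof of \thref{prop2} --- that $\mathcal{O}(X_n)$ is a Noetherian domain with infinitely many \emph{maximal ideals}, which is what Pollack's theorem requires --- rather than passing through the set of $K$-rational points, a notion that for non-algebraically-closed $K$ does not literally coincide with the set of maximal ideals (the paper itself silently conflates the two in the proof of \thref{prop2}). The only caveat is that your opening identification of $\dim(X_n)$ with the Krull dimension of $\mathcal{O}(X_n)$ is a convention the paper never pins down, since it cites a combinatorial (Hilbert-polynomial) definition of dimension; under any of the standard definitions these agree for an irreducible affine variety, so nothing is lost, but it is worth stating which definition you are using.
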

\begin{proof}
Due to \cite[Chapter 9, Section 4, Proposition 6]{CoxLittleOShea} we have that $\dim(X_n)>0$ if and only if the affine variety $X_n$ has infinitely many points. So, our corollary follows immediately from \thref{prop2}.
\end{proof}

In the case of algebraic varieties over an algebraically closed field, the fact that an irreducible variety has infinitely many points can be formulated in several equivalent ways. We want to single out the following one in the next statement.

\begin{corollary}\thlabel{the case of an algebraically closed field}
Let $K$ be an algebraically closed field, $X_n=V(f_1,\ldots,f_r)\subsetneq \mathbb{A}_K^n$ an irreducible affine variety for some polynomials $f_1,\ldots,f_r\in K[x_1,\ldots,x_n].$ Let $X_{n+1}=V(f_1,\ldots,f_r)\subseteq\mathbb{A}_K^{n+1}$ be the corresponding irreducible variety embedded in $\mathbb{A}_K^{n+1}.$ Assume that $\mathcal{O}(X_n)$ is an infinite dimensional $K-$vector space. Then, $\mathcal{O}(X_{n+1})$ fulfills the SGC. 
\end{corollary}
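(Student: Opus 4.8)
The plan is to reduce the statement to \thref{prop2} by showing that, under the present hypotheses, the variety $X_n$ must have infinitely many points; once this is established, \thref{prop2} applies verbatim and yields that $\mathcal{O}(X_{n+1})$ satisfies the SGC. Thus the entire content of the corollary lies in translating the linear-algebraic hypothesis that $\mathcal{O}(X_n)$ is an infinite-dimensional $K$-vector space into the geometric statement that $X_n$ is infinite as a set of points.

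First I would argue by contraposition. Suppose $X_n$ had only finitely many points. Since $X_n$ is irreducible and each of its points is a closed subset, $X_n$ cannot be written as the union of two proper closed subsets; but a finite set with at least two elements is exactly such a union, so $X_n$ would have to consist of a single point $p$. Because $K$ is algebraically closed, the Nullstellensatz then forces $\mathcal{O}(X_n)\cong K$, so $\mathcal{O}(X_n)$ is one-dimensional as a $K$-vector space, contradicting the hypothesis that it is infinite-dimensional. Hence $X_n$ has infinitely many points, and \thref{prop2} gives that $\mathcal{O}(X_{n+1})$ fulfills the SGC. Equivalently, one may phrase this step through Krull dimension: a finitely generated $K$-algebra that is an integral domain is finite-dimensional over $K$ precisely when its Krull dimension is zero, that is, precisely when $\dim(X_n)=0$; so infinite-dimensionality forces $\dim(X_n)>0$, and one may invoke \thref{dimension zero} instead. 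Either route finishes the proof.

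There is no serious obstacle here; the only point requiring genuine care is the use of algebraic closedness. This hypothesis is exactly what guarantees that an irreducible variety with finite-dimensional coordinate ring is a single point whose coordinate ring is $K$ itself (rather than a nontrivial finite field extension of $K$), and it is what makes the dichotomy ``single point versus infinitely many points'' clean. The essential observation is therefore the elementary fact that, over an algebraically closed field, an irreducible affine variety has an infinite-dimensional coordinate ring if and only if it has infinitely many points.
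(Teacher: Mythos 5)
Your proof is correct and follows essentially the same route as the paper: both reduce to \thref{prop2} by showing that infinite dimensionality of $\mathcal{O}(X_n)$ over $K$ forces $X_n$ to have infinitely many points. The only difference is that the paper cites the Finiteness Theorem of Cox--Little--O'Shea for this equivalence, whereas you prove the needed implication directly from irreducibility and the Nullstellensatz, which is a perfectly valid (and self-contained) substitute.
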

\begin{proof}
First of all, since $K$ is algebraically closed, by the Finiteness Theorem \cite[Chapter 5, Section 3, Theorem 6]{CoxLittleOShea} we have that $\mathcal{O}(X_n)$ is an infinite dimensional $K$--vector space if and only if the affine variety $X_n$ has infinitely many points. So, our corollary follows immediately from \thref{prop2}.
\end{proof}

\begin{remark}
Note that in \thref{the case of an algebraically closed field}, by the Finiteness Theorem \cite[Chapter 5, Section 3, Theorem 6]{CoxLittleOShea} (see also \cite[Proposition 3.7.1]{RobbianoKreuzerCA1}), we can replace the condition involving the endlessness of the dimension of $\mathcal{O}(X_n)$ as $K$--vector space by any other of the conditions described there, which describe properties involving certain classes of monomials (not) belonging to the leading terms of the elements of the polynomials in $I=(f_1,\ldots,f_r)\subseteq \mathcal{O}(X_n)$, as well as the corresponding Gr{\"o}bner basis.
\end{remark}

The reader will easily note that, in order to apply \thref{prop2} we need to guarantee that our affine variety is positive dimensional and irreducible. Irreducibility is in general a property that it is not so easy to check, however we want to single out the case of plane algebraic curves defined over the reals, where irreducibility can be characterized in a nice way.

\begin{corollary}\thlabel{the case of real plane algebraic curves}
Let $f\in\mathbb{R}[x,y]$ be an irreducible indefinite polynomial, let $X:=V(f)\subseteq\mathbb{A}_{\mathbb{R}}^2$ be the corresponding affine real plane algebraic curve, and let $Y:=V(f)\subseteq\mathbb{A}_{\mathbb{R}}^3$ be the corresponding variety embedded in $\mathbb{A}_{\mathbb{R}}^3$. Then, the ring of regular functions of $Y$, $\mathcal{O}(Y)$ fulfills the SGC. 
\end{corollary}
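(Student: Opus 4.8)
The plan is to reduce the statement to \thref{prop2}, applied with $n=2$, $r=1$, $K=\mathbb{R}$ and $f_1=f$. For this I need to verify that $X=V(f)\subseteq\mathbb{A}_{\mathbb{R}}^2$ is an irreducible affine variety with infinitely many points whose coordinate ring is exactly $\mathcal{O}(X)=\mathbb{R}[x,y]/(f)$; once this is in place, \thref{prop2} applies to the embedded cylinder $Y=V(f)\subseteq\mathbb{A}_{\mathbb{R}}^3$ and yields directly that $\mathcal{O}(Y)=\mathcal{O}(X)[z]$ satisfies the SGC. Throughout, I read \emph{indefinite} as meaning that $f$ attains both strictly positive and strictly negative values on $\mathbb{R}^2$; note in particular that an indefinite $f$ is nonconstant, so $X\subsetneq\mathbb{A}_{\mathbb{R}}^2$ as required.

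First I would show that $X$ has infinitely many real points. Since $f$ is indefinite, the open sets $U^+=\{f>0\}$ and $U^-=\{f<0\}$ are both nonempty and $\mathbb{R}^2\setminus V(f)=U^+\sqcup U^-$. If $V(f)$ were finite, then $\mathbb{R}^2\setminus V(f)$ would remain path--connected, since removing finitely many points cannot disconnect the plane; this contradicts its expression as a disjoint union of two nonempty open sets. Hence $V(f)$ is infinite, and each real point $(a,b)\in V(f)$ gives a distinct maximal ideal of $\mathbb{R}[x,y]/(f)$, so this ring has infinitely many maximal ideals, as \thref{prop2} demands.

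Next I would establish that $I(V(f))=(f)$, which simultaneously provides the irreducibility of $X$ (as $f$ is irreducible in the UFD $\mathbb{R}[x,y]$, the ideal $(f)$ is prime) and the identification $\mathcal{O}(X)=\mathbb{R}[x,y]/(f)$ needed to feed into \thref{prop2}. The inclusion $(f)\subseteq I(V(f))$ is clear. For the reverse, suppose $g\in I(V(f))$ with $f\nmid g$; since $f$ is irreducible it is then coprime to $g$, and two coprime polynomials in $\mathbb{R}[x,y]$ share only finitely many common zeros, which one sees from the nonvanishing resultant $\mathrm{Res}_y(f,g)\in\mathbb{R}[x]$. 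Thus $V(f)\cap V(g)$ is finite; but $V(f)\subseteq V(g)$ and $V(f)$ is infinite, a contradiction. Therefore $f\mid g$, proving $I(V(f))=(f)$.

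With $X$ confirmed to be an irreducible affine variety with infinitely many points and $\mathcal{O}(X)=\mathbb{R}[x,y]/(f)$, I would close by invoking \thref{prop2} verbatim to conclude that $\mathcal{O}(Y)$ satisfies the SGC. The main obstacle is the step $I(V(f))=(f)$: it is exactly here that indefiniteness is indispensable, since without it the real zero set can collapse (e.g.\ $V(x^2+y^2)=\{\mathbf{0}\}$) and both the infinitude argument and the coprimality contradiction break down. I expect the resultant--based finiteness claim to require minor care in the degenerate cases where $f$ or $g$ does not genuinely involve $y$, but this is routine. A less elementary alternative for $I(V(f))=(f)$ is to observe that indefiniteness makes $(f)$ a real ideal, so the real Nullstellensatz gives $I(V(f))=\sqrt[\mathbb{R}]{(f)}=(f)$.
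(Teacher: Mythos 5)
Your proof is correct and follows essentially the same route as the paper: both arguments reduce the statement to \thref{prop2} by verifying that $X=V(f)\subseteq\mathbb{A}_{\mathbb{R}}^2$ is an irreducible affine variety with infinitely many real points. The only difference is that the paper obtains these two facts by citing de la Puente's notes (Theorem 15 for irreducibility, Corollary 9 for infinitude), whereas you prove them from scratch --- infinitude via connectedness of the plane minus a finite set, and $I(V(f))=(f)$ (hence irreducibility and the identification $\mathcal{O}(X)=\mathbb{R}[x,y]/(f)$, which the paper's proof of \thref{prop2} takes for granted) via the coprimality--finiteness argument that the paper itself records in Remark~\ref{in the plane we can not go further}.
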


\begin{proof}
On the one hand, since $f$ is indefinite and irreducible, $X$ is irreducible by \cite[Theorem 15]{delaPuentenotes}. On the other hand, \cite[Corollary 9]{delaPuentenotes} implies that $X$ has infinitely many points. In this way, the result follows immediately again from \thref{prop2}.
\end{proof}

\begin{remark}\label{in the plane we can not go further}
It is known \cite[Algebraic Lemma 4]{delaPuentenotes} that, given any field $K$ and given two polynomials $f,\ g\in K[x,y]$ both of positive degree and coprime, then we have that $V(f)\cap V(g)$ is either empty or finite. Therefore, in this case, we can not apply \thref{prop2}.
\end{remark}

\section{Some enlightening examples in the one dimensional case}\label{section on the curve case}



As we have already illustrated in \thref{the case of real plane algebraic curves}, for affine curves the situation concerning Goldbach's condition is more delicate. We want to illustrate this fact with a pair of concrete examples.

\begin{example}
Let $n\in \mathbb{N}$, $n\geq 1$, let $K$ be an algebraically closed field, and let $f=x^ny-1\in K[x,y]$, $X=V(f)$. Then, $\dim(X)=\dim(\mathcal{O}(X))=1$, and, since the class of $x^n$ is invertible in $\mathcal{O}(X)$, we immediately check that 
\[
\mathcal{O}(X)\cong K[x,x^{-1},y]/(y-x^{-n})\cong K[x,x^{-1}].
\]
Now, since $K$ is algebraically closed, any Laurent polynomial $h\in K[x,x^{-1}]$ such that the difference between the degree of $h$ and the valuation of $h$ is bigger strictly than one, then $h$ can be factored as $h=x^zh'$, where $z\in\mathbb{Z}$, and $h'\in K[x],$ with $\deg(h')\geq 2$. So, this kind of polynomials are not irreducible because, since $K$ is algebraically closed, the only irreducible polynomials are the ones of degree one. Thus, we deduce that the irreducible elements in $K[x,x^{-1}]$ are of the form $x^w(a_1x+a_0)$, where $w\in \mathbb{Z}$ and $a_1,a_0\in K$. From the former fact, we can conclude that any Laurent polynomial with at least five terms cannot be written as the sum of two irreducible polynomials. So, $\mathcal{O}(X)$ do not fulfill the SGC.
\end{example}

\begin{example}
Let $K$ be any field, let $f=x^3y^2-1\in K[x,y]$, $X=V(f)$. Again, $\dim(X)=\dim(\mathcal{O}(X))=1$, and, because $x^3$ is invertible in $\mathcal{O}(X)$, one sees that 
\[
\mathcal{O}(X)\cong K[x,x^{-1},y]/(y^2-x^{-3}).
\]
So, each element $G\in\mathcal{O}(X)$ can be written essentially in the form
\[
G=H_1(X,X^{-1})Y+H_0(X,X^{-1}),
\] where the capital letters denote the classes of the corresponding polynomials and variables $x,x^{-1},y,h_0,h_1,g\in K[x,x^{-1},y].$ So, $G$ can be written as follows 
\[
G=[(H_1(X,X^{-1})-1)Y-1]-[Y+(H_0(X,X^{-1})+1)].
\]
Now, one can easily check that both polynomials (classes) $[(h_1(x,x^{-1})-1)y-1]$ and $[y+(h_0(x,x^{-1})+1)]$ are irreducible in $\mathcal{O}(X),$ due to the fact that each pair of coefficients in $K[X,X^{-1}]$ of both of them are coprime in $K[x,x^{-1}]$. In conclusion, $\mathcal{O}(X)$ satisfies the SGC. 
\end{example}

\section*{Acknowledgements}
Part of this work was done when D. A. J. G\'omez Ram\'irez visited the University of Valladolid in November, 2023. The authors would like to thank Ricardo Garc\'ia and Pedro Gonz\'alez P\'erez for some comments concerning the content of this paper. Alberto F. Boix was partially supported by Spanish Ministerio de Econom\'ia y Competitividad grant PID2019-104844GB-I00. D. A. J. G\'omez Ram\'irez would like to thank Michelle G\'omez for all her support and love.

\bibliographystyle{alpha}
\bibliography{bibliography}

\begin{thebibliography}{EHM05}

\bibitem[AAZ92]{factorizationintegraldomains}
D.~D. Anderson, D.~F. Anderson, and M.~Zafrullah.
\newblock Factorization in integral domains. {II}.
\newblock {\em J. Algebra}, 152(1):78--93, 1992.

\bibitem[Ari18]{Bertrandpostulateinductiveproof}
B.~R. Arif.
\newblock An inductive proof of {B}ertrand's postulate.
\newblock {\em Ganit}, 38:85--87, 2018.

\bibitem[BDN20]{bodin2020schinzel}
A.~Bodin, P.~D\`ebes, and S.~Najib.
\newblock The {S}chinzel hypothesis for polynomials.
\newblock {\em Trans. Amer. Math. Soc.}, 373(12):8339--8364, 2020.

\bibitem[BGR16]{gomezbrennernormality}
H.~Brenner and D.~A.~J. G\'{o}mez-Ram\'{\i}rez.
\newblock Normality and related properties of forcing algebras.
\newblock {\em Comm. Algebra}, 44(11):4769--4793, 2016.

\bibitem[CLO15]{CoxLittleOShea}
D.~A. Cox, J.~Little, and D.~O'Shea.
\newblock {\em Ideals, varieties, and algorithms. {A}n introduction to computational algebraic geometry and commutative algebra}.
\newblock Undergraduate Texts in Mathematics. Springer, Cham, fourth edition, 2015.

\bibitem[dlP02]{delaPuentenotes}
M.~J. de~la Puente.
\newblock Real plane algebraic curves.
\newblock {\em Expo. Math.}, 20(4):291--314, 2002.

\bibitem[EHM05]{effinger2005integers}
G.~Effinger, K.~Hicks, and G.~L. Mullen.
\newblock Integers and polynomials: comparing the close cousins {$\mathbb{Z}$} and {$\mathbb{F}_q[x]$}.
\newblock {\em Math. Intelligencer}, 27(2):26--34, 2005.

\bibitem[Eul15a]{GoldbachtoEulerconjecture}
L.~Euler.
\newblock {\em Leonhardi {E}uleri---{O}pera omnia. {S}eries 4 {A}. {C}ommercium epistolicum. {V}ol. 4.1. {L}eonhardi {E}uleri commercium epistolicum cum {C}hristiano {G}oldbach. {P}ars {I}/{C}orrespondence of {L}eonhard {E}uler with {C}hristian {G}oldbach. {P}art {I}}.
\newblock Springer, Basel, 2015.
\newblock Original texts in Latin and German.

\bibitem[Eul15b]{GoldbachtoEulerconjecturetranslated}
L.~Euler.
\newblock {\em Leonhardi {E}uleri---{O}pera omnia. {S}eries 4 {A}. {C}ommercium epistolicum. {V}ol. 4.2. {L}eonhardi {E}uleri commercium epistolicum cum {C}hristiano {G}oldbach. {P}ars {II}/{C}orrespondence of {L}eonhard {E}uler with {C}hristian {G}oldbach. {P}art {II}}.
\newblock Springer, Basel, 2015.
\newblock Original text translated from the Latin and German.

\bibitem[Gao01]{gao2001absolute}
S.~Gao.
\newblock Absolute irreducibility of polynomials via {N}ewton polytopes.
\newblock {\em J. Algebra}, 237(2):501--520, 2001.

\bibitem[GL01]{GaoLauderalgorithm}
S.~Gao and A.~G.~B. Lauder.
\newblock Decomposition of polytopes and polynomials.
\newblock {\em Discrete Comput. Geom.}, 26(1):89--104, 2001.

\bibitem[GR13]{gomez2013homological}
D.~A.~J. G\'omez~Ram\'irez.
\newblock {\em Homological Conjectures, Closure Operations, Vector Bundles and Forcing Algebras}.
\newblock PhD thesis, Universidad Nacional de Colombia with cooperation of the University of Osnabrueck, 2013.

\bibitem[GR20]{AMI}
D.~A.~J. G\'{o}mez~Ram\'{\i}rez.
\newblock {\em Artificial mathematical intelligence---cognitive, (meta)mathematical, physical and philosophical foundations}.
\newblock Springer, Cham, 2020.

\bibitem[GS]{M2}
D.~R. Grayson and M.~E. Stillman.
\newblock Macaulay2, a software system for research in algebraic geometry.
\newblock Available at \url{http://www2.macaulay2.com}.

\bibitem[GW20]{gortz2010algebraic}
U.~G\"{o}rtz and T.~Wedhorn.
\newblock {\em Algebraic geometry {I}. {S}chemes---with examples and exercises}.
\newblock Springer Studium Mathematik---Master. Springer Spektrum, Wiesbaden, second edition, 2020.

\bibitem[Hay65]{hayes1965goldbach}
D.~R. Hayes.
\newblock A {G}oldbach theorem for polynomials with integral coefficients.
\newblock {\em Amer. Math. Monthly}, 72:45--46, 1965.

\bibitem[Hoc94]{solidclosure}
M.~Hochster.
\newblock Solid closure.
\newblock In {\em Commutative algebra: syzygies, multiplicities, and birational algebra ({S}outh {H}adley, {MA}, 1992)}, volume 159 of {\em Contemp. Math.}, pages 103--172. Amer. Math. Soc., Providence, RI, 1994.

\bibitem[Koy13]{Koyuncuirreducibilityoverrings}
F.~Koyuncu.
\newblock Integral polytopes and polynomial factorization.
\newblock {\em Turkish J. Math.}, 37(1):18--26, 2013.

\bibitem[KR08]{RobbianoKreuzerCA1}
M.~Kreuzer and L.~Robbiano.
\newblock {\em Computational commutative algebra 1}.
\newblock Springer-Verlag, Berlin, 2008.
\newblock Corrected reprint of the 2000 original.

\bibitem[Kra96]{kraft1996challenging}
H.~Kraft.
\newblock Challenging problems on affine {$n$}-space.
\newblock In {\em S\'eminaire Bourbaki. Volume 1994/95. Expos\'es 790-804}, number 237, pages 295--317. Paris: Soci{\'e}t{\'e} Math{\'e}matique de France, 1996.

\bibitem[Par20]{Goldbachpowerseries}
E.~Paran.
\newblock Twin-prime and {G}oldbach theorems for {$\mathbb{Z}[\![x]\!]$}.
\newblock {\em J. Number Theory}, 213:453--461, 2020.

\bibitem[Pol11]{pollack2011polynomial}
P.~Pollack.
\newblock On polynomial rings with a {G}oldbach property.
\newblock {\em Amer. Math. Monthly}, 118(1):71--77, 2011.

\bibitem[Ram00]{ramanujan1919proof}
S.~Ramanujan.
\newblock A proof of {B}ertrand's postulate [{J}. {I}ndian {M}ath. {S}oc. {\bf 11} (1919), 181--182].
\newblock In {\em Collected papers of {S}rinivasa {R}amanujan}, pages 208--209. AMS Chelsea Publ., Providence, RI, 2000.

\bibitem[RS98]{RattanStewart}
A.~Rattan and C.~Stewart.
\newblock Goldbach's conjecture for {$\mathbb{Z}[x]$}.
\newblock {\em C. R. Math. Acad. Sci. Soc. R. Can.}, 20(3):83--85, 1998.

\bibitem[Rud76]{RudinMathematicalAnalysis}
W.~Rudin.
\newblock {\em Principles of mathematical analysis}.
\newblock International Series in Pure and Applied Mathematics. McGraw-Hill Book Co., New York-Auckland-D\"{u}sseldorf, third edition, 1976.

\bibitem[Sai06]{Saidakintegerpolynomials}
F.~Saidak.
\newblock On {G}oldbach's conjecture for integer polynomials.
\newblock {\em Amer. Math. Monthly}, 113(6):541--545, 2006.

\bibitem[Stu96]{SturmfelsGBCPbook}
B.~Sturmfels.
\newblock {\em Gr\"{o}bner bases and convex polytopes}, volume~8 of {\em University Lecture Series}.
\newblock American Mathematical Society, Providence, RI, 1996.

\bibitem[TY05]{TauvelYu}
P.~Tauvel and R.~W.~T. Yu.
\newblock {\em Lie algebras and algebraic groups}.
\newblock Springer Monographs in Mathematics. Springer-Verlag, Berlin, 2005.

\bibitem[Vau16]{vaughan2016goldbach}
R.~C. Vaughan.
\newblock Goldbach's conjectures: a historical perspective.
\newblock In {\em Open problems in mathematics}, pages 479--520. Springer, [Cham], 2016.

\end{thebibliography}
\end{document}